\documentclass{elsarticle}  

\usepackage{graphicx}
\usepackage[cmex10]{amsmath}
\usepackage{amsfonts}
\usepackage{amssymb}
\usepackage{color}
\usepackage{subfigure}

\newenvironment{proof}{\paragraph{Proof}}{\hfill$\square$}

\newtheorem{thm}{Theorem}
\newtheorem{defn}{Definition}
\newtheorem{prop}{Proposition}

\newcommand{\Real}{\mathbb{R}}
\newcommand{\Sphere}{\mathcal{S}}
\newcommand{\Ball}{\mathcal{B}}
\newcommand{\Volume}{\mathcal{V}}
\newcommand{\Disc}{\mathcal{D}}

\begin{document}

\begin{frontmatter}

\title{Approximation with Random Bases: Pro et Contra }

\author[Leic]{Alexander N. Gorban}\ead{ag153@le.ac.uk}
\author[Leic,Leti]{Ivan Yu. Tyukin\corref{cor1}\fnref{f1}}\ead{I.Tyukin@leicester.ac.uk}
\author[Toy]{Danil V. Prokhorov}\ead{dvprokhorov@gmail.com}
\author[Leic,Apical]{Konstantin I. Sofeikov}\ead{sofeykov@gmail.com}

\address[Leic]{University of Leicester, Department of Mathematics, University Road, Leicester, LE1 7RH, UK}
\address[Leti]{Department of Automation and
        Control Processes, St. Petersburg State University of
        Electrical Engineering, Prof. Popova str. 5, Saint-Petersburg, 197376, Russian Federation}
\address[Toy]{Toyota Research Institute NA, Ann Arbor, MI 48105, USA}

\cortext[cor1]{Corresponding author}
\fntext[f1]{The work was supported by Innovate UK Technology Strategy Board (Knowledge Transfer Partnership grant KTP009890) and Russian Foundation for Basic Research (research project No. 15-38-20178).}

\begin{abstract}

In this work we discuss the problem of selecting suitable approximators from families of parameterized elementary functions that are known to be dense in
a Hilbert space of functions. We consider and analyze published procedures, both randomized and deterministic, for selecting elements from these families that have been shown to ensure the rate of convergence in $L_2$ norm of order $O(1/N)$, where $N$ is the number of elements. We show that both {\color{black} randomized and deterministic procedures} are successful {\color{black} if } additional information about the families of functions to be approximated is provided. In {\color{black} the} absence of such additional information one may observe exponential growth of the number of terms needed to approximate the function and/or extreme sensitivity of the outcome of the approximation to parameters. Implications of our analysis for applications of neural networks in modeling and control are illustrated with examples.

\end{abstract}
\begin{keyword} Random bases, measure concentration, neural networks, approximation
\end{keyword}

\end{frontmatter}


\section{Introduction}

The problem of efficient representation and modeling of data is important in many areas of science and engineering. A typical problem in this area involves constructing quantitative models (maps) of the type
\[
\begin{split}
& x_1,x_2,\dots,x_d \mapsto f(x_1,x_2,\dots,x_d),
\end{split}
\]
where $x_1,x_2,\dots,x_d$, $x_i\in\Real$, $i=1,\dots,d$ are variables and $f:\Real^d\rightarrow\Real$ is an unknown functional relation between the variables. The total number, $d$, of variables, determining input data may be large, and physical models of such relations $f(\cdot)$ are not always available.

In {\color{black} the} absence of acceptably detailed prior knowledge of ``true'' models, $f(\cdot)$, a commonly used alternative is to express the function $f(\cdot)$ as a linear combination of known functions, $\varphi_i(\cdot)$, $\varphi:\Real^d\rightarrow\Real$:
\begin{equation}\label{eq:approximation:0}
f(x)\simeq f_N(x)= \sum_{i=1}^N c_i \varphi_i(x), \ c_i\in\Real.
\end{equation}
Numerous classes of functions $\varphi_i(\cdot)$ in (\ref{eq:approximation:0}) have been proposed and analysed to date, starting from  $\sin(\cdot)$, $\cos(\cdot)$ and polynomial functions featured in classical Fourier, Fejer, and Weierstrass results,  wavelets \cite{Trefethen:2013}, \cite{Resnikoff:2002}, and reaching out to linear combinations of sigmoids \cite{Cybenko}
\begin{equation}\label{eq:mlp}
f_N(x)=\sum_{i=1}^N c_i \frac{1}{1+e^{-(w_i^{T}x+b_i)}}, \ w_i\in\Real^d, \ b_i\in\Real,
\end{equation}
radial basis functions \cite{Park93}
\begin{equation}\label{eq:rbf}
f_N(x)=\sum_{i=1}^N c_i e^{(-\|w_i^{T}x+b_i\|^2)} \ w_i\in\Real^d, \ b_i\in\Real,
\end{equation}
and other general functions \cite{Gorban:1998} that are often used in  neural {\color{black} network} literature \cite{Haykin99}. {\color{black} Sometimes} the values of parameters $w_i$, $b_i$ may be pre-selected on the basis of additional prior knowledge, leaving only linear weights $c_i$ for training. If no prior information is available then both nonlinear ($w_i$ and $b_i$) and linear ($c_i$)
parameters, or weights, are typically subject to training on data specific to
the problem at hand (full network training).

One special feature that makes full training of approximators (\ref{eq:mlp}), (\ref{eq:rbf}) particularly attractive is that in addition to their universal approximation capabilities \cite{Cybenko}, \cite{Hornik90}, \cite{Park93} and their homogenous
structure, they are reported to be efficient when the dimension $d$ of input data is relatively high. In particular, if {\it all parameters $w_i$, $c_i$, $b_i$ } are allowed to vary, the
order of convergence rate of the approximation error of a sufficiently smooth function
$f(\cdot)\in\mathcal{C}^{0}[0,1]^d$ as a function of $N$
(the number of elements in the network) is shown to be independent
of the input dimension $d$ \cite{Jones:1992}, \cite{Barron}.
Furthermore, the achievable rate of convergence of the $L_2$-norm of
$f(\cdot)-f_N(\cdot)$ is shown to be $O(1/N^{1/2})$. This contrasts sharply with the rate $O(d^{-1}/N^{1/d})$ corresponding to the worst-case estimate inherent to linear combinations (\ref{eq:approximation:0}) with $\varphi_i(\cdot)$ given or (\ref{eq:mlp}), (\ref{eq:rbf}) with $w_i,b_i$ fixed. In particular, it is shown in \cite{Barron} that if
only linear parameters of (\ref{eq:mlp}), (\ref{eq:rbf}) are
adjusted the approximation error {\it cannot be made} smaller than
$Cd^{-1}/N^{1/d}$, where $C$ is independent on $N$, uniformly for functions satisfying the same smoothness
constraints.

Favorable independence of the order of convergence rates
on the input dimension of the function to be approximated, however, comes at a price. Construction of
such efficient models of data involves a nonlinear optimization
routine searching for the best possible values of $w_i$, $b_i$. The necessity to adjust {\color{black} parameters entering (\ref{eq:mlp}) nonlinearly},
(\ref{eq:rbf}) restricts practical application of these models in {\color{black} a range of
relevant problems} (see e.g., \cite{tpt2002_at}, \cite{tpt2003_tac},
\cite{tpvl2007_tac}, \cite{Annaswamy99} for an overview of potential issues).

An alternative to adjustment of nonlinear parameters of (\ref{eq:mlp}),
(\ref{eq:rbf}) has been proposed in \cite{Schmidt:1992}, \cite{Pao:1994}, \cite{igelpao95} and further developed in \cite{Rahimi08}, \cite{Rahimi08:2} (see also earlier work by F. Rosenblatt \cite{Rosenblatt} in which he
discussed perceptrons with random weights). In these works the nonlinear parameters $w_i$ and $b_i$ are proposed to
be set randomly at the initialization, rather than through training.
The only trainable parameters are those which enter the network
equation linearly, $c_i$.  Random selection of weights in the hidden layers is supposed to generate a set of (basis) functions that is sufficiently rich to approximate a given function by mere linear combinations from this set. This crucially simplifies training in systems featuring such approximators, and renders an otherwise computationally complex nonlinear optimization problem into a much simpler linear one.
Moreover, the rate of  error convergence is argued to be $O(1/N^{1/2})$ \cite{igelpao95}, \cite{Rahimi08}, \cite{Rahimi08:2}.

This brings us to a paradoxical contradiction: on the one hand the reported convergence rate $O(1/N^{1/2})$ which a randomized approximator is supposed to achieve contradicts to the earlier worst-case estimate $O(d^{-1}/N^{1/d})$ obtained in \cite{Barron}. On the other hand numerous studies report successful application of such
randomization ideas (see also comments \cite{lichow97})  in a variety of intelligent control applications
\cite{he05}, \cite{LewisGe06}, \cite{Jag06}, \cite{LewisCampos02}, \cite{Liuetal07} as well as in  machine learning (see e.g. \cite{INS:Wang:2014}, \cite{Neural_Networks:Widrow:2013} and references therein). The question, therefore, is if it is possible to resolve such an apparent controversy? Answering to this question is the main purpose of this contribution.

The paper is organized as follows. We begin the analysis with Section \ref{sec:Analysis} in which {\color{black} we}
review basic reasoning in \cite{igelpao95} and  compare these results
with  \cite{Jones:1992}, \cite{Barron}. We show that, although these
results may seem inconsistent, they have been derived for
different performance criteria. The worst-case estimate in \cite{Barron} is ``crisp'', whereas the convergence rate in
\cite{igelpao95} is probabilistic. That is it involves a measure function with respect to which the rate $O(1/N^{1/2})$ is assured. Introduction of measure into the problem brings out a range of interesting consequences that is discussed and analyzed in Section \ref{sec:main}. There we  approach data approximation problem as that of representation of a given vector by {\color{black} a} linear {\color{black} combination} of randomly chosen {\color{black} vectors} in high dimensions. A simple logic suggests that if one takes $m\leq n$ random vectors in $\Real^n$ then {\color{black} it can be expected} that with probability one these vectors will be linearly independent because the set of linearly dependent corteges $\{x_1,\ldots , x_m\}$ ($m\leq n$) is a proper algebraic subset in the space of corteges $(\Real^n)^m$. We can select $n$ random vectors, {\color{black} and} with probability one it will be a basis. Every given data vector $y$ can be represented by coordinates in this basis. If these $n$ vectors are (accidentally) too close to dependence we can generate few more vectors that will enable us to represent the data vector $y$. We will show, however, that this simple and correct reasoning looses its credibility in high dimensions.  We show that in an $n$-dimensional {\color{black} unit} cube $[-1,1]^n$ a randomly generated vector $x$ will be almost orthogonal to a given data vector $y$ (the angle between $x$ and $y$ will be close to $\pi/2$ with probability close to one). To compensate for this {\it waist concentration} effect \cite{GAFA:Gromov:2003}, \cite{Gromov:1999}, \cite{Gorban:2006} one needs to generate exponentially many random vectors. Typicality of such exponential growth is an inherent feature of  high-dimensional data representation, including problems of data approximation and modelling.  Moreover, for high-dimensional data representation the following two seemingly contradictory situations are typical in some sense:

\begin{itemize}
 \item {\it with probability close to one linear combinations of $n-k$ random vectors approximate any normalized vector with accuracy  $\varepsilon$ if $k \ll n$ and no constraints on the values of coefficients in linear combinations are imposed (and this probability is one, if $k=0$);}

\item {\it with probability close to one an exponentially large number $N$ of random vectors are pairwise almost orthogonal and do not span an arbitrarily selected normalized vector if coefficients in linear combinations are not allowed to be arbitrarily large.}

\end{itemize}

{\color{black} The approach to assess effective dimensionality of spaces based on  $\epsilon$-{\it quasi\-or\-tho\-go\-na\-lity} was introduced in \cite{Kurkova}, \cite{Hecht}. The authors demonstrated that in high dimensions there {\it exist} exponentially large sets of quasiorthogonal vectors. This observation has been exploited in the random indexing literature \cite{Salhgren2005}. Here we show that not only such sets exist, but also that they are typical in some sense}. Implications of our analysis are illustrated in Section \ref{sec:discussion} followed by few examples
presented in Section \ref{sec:Example}.  Section \ref{sec:Conclusion} concludes the
paper.

\section{Notation}\label{sec:notation}

Throughout the paper the following notational agreements are used

\begin{itemize}
\item $\Real$  denotes the field of real number;
\item $\Real^n$ stands for the $n$-dimensional linear space over the field of reals;
\item let $x\in\Real^n$, then $\|x\|$ is the Euclidean norm of $x$: $\|x\|=\sqrt{x_1^2+\cdots + x_n^2}$;
\item if $x,y$ are two non-zero vectors from $\Real^n$ then $\angle(x,y)$ denotes an angle between these vectors;
\item symbol $|\cdot |_{\Real^n}$ is reserved to denote an arbitrary norm in $\Real^n$;
\item $\Sphere^{n-1}(R)$ denotes an $n-1$-sphere of radius $R$ centred at $0$: $\Sphere^{n-1}(R)=\{x\in\Real^n | \ \|x\|=R \ \}$
\item $\mu$ is the normalized Lebesgue measure on $\Sphere^{m-1}(1)$: $\mu(\Sphere^{m-1}(1))=1$.
\item $\Ball^{n}(R)$ denotes a $n$-ball of radius $R$ centered at $0$: $\Ball^n(R)=\{x\in\Real^n | \ \|x\|\leq R\}$
\item $\Volume(\Xi)$ is the Lebesgue volume of  $\Xi \subset \Real^n$.
\item $\Disc^{n-1}(R)$ stands for a $n-1$-disc in the $n$-ball $\Ball^n(R)$ corresponding to its largest equator, and $\Disc^{n-1}_\delta(R)$ is its $\delta$-thickening.
\item Let $f:[0,1]^d\rightarrow\Real$ be a continuous function, then
\[
\|f\|^2=\langle f,f \rangle=\int_{[0,1]^d} f(x)f(x) d x,
\]
denotes the $L_2$-norm of $f$.
\end{itemize}

\section{Preliminary analysis and motivation}\label{sec:Analysis}

In this section we review and compare the
so-called {\it greedy approximation} upon which the famous Barron's
construction is based \cite{Barron} and the Random Vector Functional-Link (RVFL) network
\cite{igelpao95} in which the basis functions are randomly chosen,
and only their linear parameters are optimized. As we show below, despite the error convergence rates appear to be
of the same order, they are in essence dramatically different. One rate is crisp in the sense that it is a worst-case estimate for all functions from a given class. The other one is probabilistic in nature, and hence holds in probability. The consequences of the latter are further investigated in Section \ref{sec:main}.


Consider the following class of problems. Suppose that
$g:\Real\rightarrow\Real$ be a function such that
\[
\|g\|\leq M, \ M\in\Real_{>0},
\]
and
\[
\mathcal{G}=\{g(w^{T}x + b)\}, \ w\in\Real^d, \ b\in\Real,
\]
be a family of parameterized functions $g(\cdot)$. Let $f\in\mathcal{C}^{0}([0,1]^n)$, and let $f$ belong to a convex hull of $\mathcal{G}$.
In other words there is a sequence of $w_i$, $b_i$, and $c_i$ such that
\[
f(x)=\sum_{i=1}^{\infty} c_i g(w_i^{T}x + b_i), \ \sum_{i=1}^{\infty}c_i=1, \ c_i\geq 0.
\]
{\color{black} We are interested in approximating $f$ by the finite sum}
\begin{equation}\label{eq:approximation}
f_N(x)=\sum_{i=1}^N c_i g(w_i^{T}x+b_i).
\end{equation}
It is important to know how approximation errors, defined in some meaningful sense, may decay  with $N$, and what the computational costs for achieving this rate of decay {\color{black} are}?

\subsection{Greedy approximation and Jones Lemma}

In order to answer the question above one needs first to determine
the error of approximation. It is natural for functions from $L_2$
to define the approximation error as follows:
\begin{equation}\label{eq:error_greedy}
e_N=\|f_N - f\|
\end{equation}

The classical Jones iteration \cite{Jones:1992} (refined later by
Barron \cite{Barron}) provides us with the following estimate of
achievable convergence rate:
\begin{equation}\label{eq:rate:greedy}
\begin{split}
e_N^2&\leq \frac{M'^2 e_0^2}{N e_0^2 + M'^2}, \   M'> \sup_{g\in \mathcal{G}} \|g\|+\|f\|.
\end{split}
\end{equation}
The rate of convergence depends on $d$ only through the $L_2$-norms
of $f_0$, $g$, and $f$. The iteration itself is deterministic and
can be described as follows:
\begin{equation}\label{eq:Jones_iteration}
\begin{split}
f_{N+1}(x)&=(1-\alpha_N)f_N(x) + \alpha_N g(w_N^{T}x+b_N)\\
\alpha_N&= \frac{{\color{black}e_N}^2}{M''^2 + {\color{black}e_N}^2}, \ \ M''>M'
\end{split}
\end{equation}
where parameters ${\color{black}w_N},b_N$ of $g$ {\color{black}are} chosen such that the following condition holds
\begin{equation}\label{eq:Jones_iteration:g_n}
\langle f_N-f,g(w_N^T \cdot + b_N)-f\rangle < \frac{((M'')^2-(M')^2) e_N^2}{2(M'')^2},
\end{equation}
This choice is always possible (see \cite{Jones:1992} for
details) as long as the function $f$ is in the convex hull of $\mathcal{G}$.

According to (\ref{eq:rate:greedy}) the rate of convergence of  such
approximators is estimated as
\[
e_N^2 =  O(1/N).
\]
This convergence estimate is {\it guaranteed} because it is the
upper bound for the approximation error at the {\color{black}$N$}th step of
iteration (\ref{eq:Jones_iteration}).


\subsection{Approximation by linear combinations of functions with randomly chosen parameters}

We now turn our attention to the result in \cite{igelpao95}. In this
approximator the original function $f(\cdot)$ is assumed to have the
following integral representation\footnote{We keep the original
notation of \cite{igelpao95} which uses both $\omega$ and $w$ for
the sake of consistency.}
\begin{equation}\label{eq:approximation:2}
f(x)=\lim_{\alpha\rightarrow\infty}\lim_{\Omega\rightarrow\infty} \int_{W^d} F_{\alpha,\Omega}(\omega)g(\alpha w^{T}x+b)d \omega,
\end{equation}
where $g:\Real\rightarrow\Real$ is a non-trivial function from $L_2$:
\begin{equation}\label{eq:square_int}
0<\int_{\Real}g^2(s)ds < \infty,
\end{equation}
where
$\omega=(y,w,u)\in\Real^{d}\times\Real^d\times[-\Omega,\Omega]$, $\Omega\in\Real_{>0}$, $W^d=[-2\Omega;2\Omega]\times I^d \times V^d$, $V^d=[0;\Omega]\times[-\Omega;\Omega]^{d-1}$, $b=-(\alpha w^{T} y + u)$ and
\[
F_{\alpha,\Omega}(\omega)\sim \frac{\alpha\prod_{i=1}^d w_i}{\Omega^d 2^{d-1}} f(y) \cos_{\Omega}(u), \ \cos_\Omega(u)=\left\{\begin{array}{l}
                                                                                                                               \cos(u), \ u\in[-\Omega,\Omega]\\
                                                                                                                                0, \ u\notin [-\Omega,\Omega]
                                                                                                                                \end{array}\right.
\]
(see \cite{igelpao95}, \cite{lichow97} for more detailed description).
Function $g(\cdot)$ induces a parameterized basis. Indeed if we were
to take integral (\ref{eq:approximation:2}) in quadratures for
sufficiently large values of $\alpha$ and $\Omega$, we would then
express $f(x)$ by the following sums of parameterized $g(\alpha
w^{T}x+b)$ \cite{igelpao95}:
\begin{equation}\label{eq:approximation:R}
f(x)\simeq \sum c_{i} g(\alpha w_i^{T}x + b_i), \ b_i=-
\alpha(w_i^{T} y_i + u_i)
\end{equation}
The summation in (\ref{eq:approximation:R}) is taken over points
$\omega_i$ in $W^d$, and $c_i$ are weighting coefficients.
Variables $\alpha$ in (\ref{eq:approximation:2}) and $\alpha_N$ in
(\ref{eq:Jones_iteration}) play different roles in each
{\color{black} approximation scheme}. In (\ref{eq:Jones_iteration}) the value of
{\color{black}$\alpha_N$} is set to ensure that the approximation error is
decreasing with every iteration, and  in (\ref{eq:approximation:2})
it stands for a scaling factor of random sampling.

The main idea of \cite{igelpao95} is to approximate integral
representation (\ref{eq:approximation:2}) of $f(x)$ using the Monte-Carlo integration method as
\begin{equation}\label{eq:MC_approximation}
\begin{split}
f(x)&\simeq \frac{4 \Omega^d}{N} \lim_{\alpha\rightarrow\infty}\lim_{\Omega\rightarrow\infty} \sum_{k=1}^{\color{black}N} F_{\alpha,\Omega}(\omega_k)g(\alpha w^{T}_k x+b_k)\\
&=\lim_{\alpha\rightarrow\infty}\lim_{\Omega\rightarrow\infty} \sum_{k=1}^N c_{k,\Omega}(\alpha,\omega_k)g(\alpha w^{T}_kx+b_k)\\
&=f_{N,\omega,\Omega}(x),
\end{split}
\end{equation}
where the coefficients $c_k(\alpha,\omega_k)$ are defined as
\begin{equation}\label{eq:MC_coefficients}
c_{k,\Omega}(\alpha,\omega_k)= \frac{1}{N}\frac{4\alpha\prod_{i=1}^d w_{k,i}}{2^{d-1}} \cos_{\Omega}(u_k) f(y_k)
\end{equation}
and $\omega_k=(y_k,w_k,u_k)$ are randomly sampled in $W^d$ (domain
of parameters, i.e., weights and biases of the network).

When the number of samples, $N$, i.e., {\em the network size}, is
large, then
\begin{equation}\label{eq:error_random:1}
E_\omega(N)=\sqrt{E_\omega \int_K |f(x)-f_{N,\omega,\Omega}(x)|^2 dx }, \ K\subset[0,1]^d,
\end{equation}
converges to zero asymptotically for large $N$. In particular:
\begin{thm}[Igel'nik and Pao \cite{igelpao95}]\label{theorem:igelnik} For any compact $K$, $K\subset [0,1]^d$, $K\neq [0,1]^d$ and any absolutely integrable function $g$  satisfying (\ref{eq:square_int}) there exists a sequence of $f_{N,\omega,\Omega}$ and a sequence probability measures $\{\mu_{N,\Omega,\alpha}\}$ such that
\[
\lim_{N\rightarrow {\color{black}\infty}} E_\omega(N) =0.
\]
\end{thm}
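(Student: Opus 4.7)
The plan is to decompose the error $E_\omega(N)$ into a deterministic bias and a stochastic variance term, and then choose the parameters $\alpha$, $\Omega$ of the probability measure $\mu_{N,\Omega,\alpha}$ as suitably slow functions of $N$ so that both parts vanish simultaneously. Concretely, for fixed $\alpha$, $\Omega$ let
\[
I_{\alpha,\Omega}(x)=\int_{W^d} F_{\alpha,\Omega}(\omega)\, g(\alpha w^{T} x+b)\, d\omega
\]
denote the truncated integral appearing under the double limit in (\ref{eq:approximation:2}), so that $f_{N,\omega,\Omega}$ in (\ref{eq:MC_approximation}) is precisely an $N$-sample Monte Carlo estimator of $I_{\alpha,\Omega}$ with respect to the uniform measure on $W^d$ (the coefficients in (\ref{eq:MC_coefficients}) are the importance weights that make the estimator unbiased). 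Then, pointwise in $x\in K$,
\[
E_\omega\bigl[f_{N,\omega,\Omega}(x)\bigr]=I_{\alpha,\Omega}(x),
\]
and the triangle inequality in $L_2(K)$ yields
\[
E_\omega(N)\le \|f-I_{\alpha,\Omega}\|_{L_2(K)}+\sqrt{E_\omega\|I_{\alpha,\Omega}-f_{N,\omega,\Omega}\|_{L_2(K)}^2}.
\]

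The first (bias) term is deterministic and its control reduces to the analytic fact that (\ref{eq:approximation:2}) holds: invoking dominated convergence, and using that $K$ is a strict compact subset of $[0,1]^d$ (which is where the representation in \cite{igelpao95} is valid), one obtains $\|f-I_{\alpha,\Omega}\|_{L_2(K)}\to 0$ as $\alpha,\Omega\to\infty$, with an explicit rate $\eta(\alpha,\Omega)$ depending on the smoothness of $f$ and on the tails of the Fourier-type representation. The second (variance) term is handled by Fubini together with independence of the samples $\omega_1,\dots,\omega_N$:
\[
E_\omega\|I_{\alpha,\Omega}-f_{N,\omega,\Omega}\|_{L_2(K)}^2=\frac{1}{N}\int_K \mathrm{Var}_\omega\!\bigl[c_{1,\Omega}(\alpha,\omega_1)\,g(\alpha w_1^{T}x+b_1)\bigr]\,dx.
\]
Using $|g|\in L_2$ from (\ref{eq:square_int}), the explicit formula (\ref{eq:MC_coefficients}) and the boundedness of $\cos_\Omega$ and $f$ on $[0,1]^d$, the inner variance is bounded by a polynomial $P(\alpha,\Omega)$ in $\alpha$ and $\Omega$, whence the stochastic term is at most $\sqrt{P(\alpha,\Omega)/N}$.

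Combining the two bounds,
\[
E_\omega(N)\le \eta(\alpha,\Omega)+\sqrt{P(\alpha,\Omega)/N}.
\]
The conclusion follows by selecting diverging sequences $\alpha=\alpha(N)\to\infty$ and $\Omega=\Omega(N)\to\infty$ slowly enough that $P(\alpha(N),\Omega(N))/N\to 0$, while still ensuring $\eta(\alpha(N),\Omega(N))\to 0$; this defines the sequence of measures $\mu_{N,\Omega,\alpha}$ claimed in the statement and yields $E_\omega(N)\to 0$. The main obstacle I anticipate is quantitative: keeping the growth of the variance bound $P(\alpha,\Omega)$ under control, since the importance weights (\ref{eq:MC_coefficients}) are proportional to $\alpha$ and to a product of up to $d$ factors $w_{k,i}$ ranging over $[-\Omega,\Omega]$, so $P(\alpha,\Omega)$ can grow like $\alpha^{2}\Omega^{2d}$ or worse; this is exactly why the theorem gives only asymptotic vanishing and not a rate independent of $d$, and it is the technical point around which the paper's subsequent critique of the advertised $O(1/\sqrt{N})$ rate is built.
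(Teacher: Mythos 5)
This theorem is imported by citation: the paper states it as a result of Igel'nik and Pao \cite{igelpao95} and supplies no proof of its own, only the informal description of the Monte-Carlo construction in (\ref{eq:MC_approximation})--(\ref{eq:MC_coefficients}). Your argument (unbiasedness of the weighted samples for the truncated integral $I_{\alpha,\Omega}$, a bias term controlled by the validity of the representation (\ref{eq:approximation:2}) on the strict compact subset $K$, a variance term of order $P(\alpha,\Omega)/N$ via Fubini and independence, and a diagonal choice $\alpha(N),\Omega(N)\to\infty$ slow enough that both terms vanish) is precisely the strategy of the cited proof, and your closing remark correctly identifies why the resulting sequence of measures $\mu_{N,\Omega,\alpha}$ must depend on $N$ and why no dimension-free rate comes for free.
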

Furthermore, under some additional restrictions on the functions $g,f$ the expectation $E_\omega(N)$ is shown to obey (Theorem 3 in \cite{igelpao95}):
\begin{equation}\label{eq:error_random:1.5}
E_\omega(N)\leq \frac{C}{N^{1/2}}.
\end{equation}

Similar results have been reported in \cite{Rahimi08:2}. There the authors look at the following classes of functions:
\[
\mathcal{F}_p=\{f(x)=\int_{\Omega} \alpha(\omega) \phi(x,\omega) d\omega  \ | \ |\alpha(\omega)|\leq C p(\omega)\}, \ \sup_{x,w} |\phi(x,w)|\leq 1,
\]
where $p$ is a distribution on $\Omega$, and
\[
\mathcal{F}_\omega=\{f(x)=\sum_{k=1}^N \alpha_k \phi(x,\omega_k) \ | |\alpha_k|\leq C/N \}.
\]
For the chosen classes of functions they establish the following
\begin{thm}[cf. Lemma 1 in \cite{Rahimi08:2}]\label{theorem:Rahimi} Let $f$ be a function from $\mathcal{F}_p$ and $w_1$,$\dots$, $w_N$ be drawn iid from $p$. Then for any $1>\delta>0$ with probability at least $1-\delta$ over $w_1,\dots,w_N$ there exists a function $f_N$ from $\mathcal{F}_\omega$ so that
\begin{equation}\label{eq:error_random:2}
\|f-f_N\|\leq \frac{C}{\sqrt{N}}\left(1+2\sqrt{\log\frac{1}{\delta}}\right).
\end{equation}
\end{thm}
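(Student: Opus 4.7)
The plan is to rewrite $f$ as an expectation and then construct $f_N$ as the natural Monte Carlo estimator, following the same logic that underlies Theorem~\ref{theorem:igelnik}, but with a single cleaner concentration argument.

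First I would reparameterise. Writing $\beta(\omega) = \alpha(\omega)/p(\omega)$, the hypothesis $|\alpha(\omega)| \leq C p(\omega)$ gives $\|\beta\|_{\infty} \leq C$ and
\[
f(x) = \int_{\Omega} \beta(\omega)\, \phi(x,\omega)\, p(\omega)\, d\omega = \mathbb{E}_{\omega \sim p}\bigl[\beta(\omega)\, \phi(x,\omega)\bigr].
\]
Given iid samples $\omega_1,\dots,\omega_N \sim p$, define
\[
f_N(x) = \frac{1}{N}\sum_{k=1}^{N}\beta(\omega_k)\,\phi(x,\omega_k),
\]
which is an unbiased estimator of $f$. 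Since the coefficients $\alpha_k = \beta(\omega_k)/N$ obey $|\alpha_k| \leq C/N$, we have $f_N \in \mathcal{F}_\omega$ deterministically for every draw.

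Next I would bound the expected $L_2$-error. By Fubini,
\[
\mathbb{E}\|f-f_N\|^2 = \int \mathbb{E}\bigl[(f(x)-f_N(x))^2\bigr]\,dx,
\]
and for fixed $x$ the integrand is the variance of an average of iid bounded random variables $\beta(\omega_k)\phi(x,\omega_k)$, so it is at most $\tfrac{1}{N}\mathbb{E}[\beta(\omega)^2\phi(x,\omega)^2] \leq C^2/N$ using $|\phi|\leq 1$ and the unit volume of the integration domain. Jensen's inequality then yields
\[
\mathbb{E}\|f-f_N\| \;\leq\; \sqrt{\mathbb{E}\|f-f_N\|^2} \;\leq\; \frac{C}{\sqrt{N}}.
\]

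Finally I would invoke a bounded differences (McDiarmid) argument to concentrate $\|f-f_N\|$ around its mean. Viewing $G(\omega_1,\dots,\omega_N) = \|f-f_N\|$, replacing a single $\omega_k$ by $\omega_k'$ changes $f_N$ in $L_2$-norm by at most $\frac{1}{N}\bigl(\|\beta(\omega_k)\phi(\cdot,\omega_k)\| + \|\beta(\omega_k')\phi(\cdot,\omega_k')\|\bigr) \leq 2C/N$, so by the reverse triangle inequality $G$ has coordinatewise differences bounded by $2C/N$. McDiarmid's inequality then gives
\[
\Pr\bigl(G \geq \mathbb{E}[G] + t\bigr) \;\leq\; \exp\!\left(-\frac{N t^2}{2 C^2}\right),
\]
and choosing $t$ to make the right-hand side equal $\delta$ yields $t = C\sqrt{2\log(1/\delta)/N}$. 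Combining with the bound on $\mathbb{E}[G]$ gives, with probability at least $1-\delta$, $\|f-f_N\| \leq \tfrac{C}{\sqrt{N}}\bigl(1+\sqrt{2\log(1/\delta)}\bigr)$, which is of the form asserted (up to the stated constant $2$ in front of $\sqrt{\log(1/\delta)}$, which is a harmless loosening).

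The only delicate step is the bounded differences estimate: one must verify carefully that $\phi$'s uniform bound and the domain of integration propagate to an $L_2$-bound of at most $1$, so that the per-coordinate change is $2C/N$ and not something worse. Everything else is a textbook Monte Carlo plus concentration argument.
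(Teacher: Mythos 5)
Your argument is correct: the reparameterisation $\beta=\alpha/p$, the unbiased Monte Carlo estimator, the variance bound $\mathbb{E}\|f-f_N\|^2\le C^2/N$, and the McDiarmid step with coordinatewise differences $2C/N$ all check out, and your constant $\sqrt{2}$ is in fact slightly sharper than the stated $2$. The paper itself quotes this theorem from \cite{Rahimi08:2} without proof, and your derivation is essentially the standard proof of that cited lemma, so there is nothing to reconcile against the paper's text.
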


%
At the first glance errors (\ref{eq:error_greedy}), (\ref{eq:error_random:1}), (\ref{eq:error_random:2}) and their respective convergence rates look very similar. Yet, they are fundamentally different in that (\ref{eq:error_greedy}) is deterministic and its convergence rate is crisp, whereas the other two estimates have an additional element - a probability measure - characterizing their convergence rate. Introduction of this measure enables to ``ignore'' worst-case elements corresponding to the rate of convergence  $O(d^{-1}/N^{1/d})$. At the same time, it imposes some limitations too since there always is a non-zero probability of an unlucky draw from the probability distribution which will require re-initialization at some stage {\color{black} of approximation}.
 Furthermore, explicit presence of $1/\delta$ in (\ref{eq:error_random:2})\footnote{The very same term is implicit in the error rate derived for Monte-Carlo inspired approach in \cite{igelpao95}. This follows immediately from the Chebyshev inequality. Indeed, if $X_1,\dots,X_N$ are iid random variables with the same mean, $\mu$ and variance $\sigma^2$ then the probability that $|(X_1+\cdots + X_N)/N -\mu| \geq \delta$ is assured to be at most $\sigma^2/(\delta^2 N)$.} suggest that this rapid convergence of order $1/N^{1/2}$ is assured only up to a given and fixed tolerance. These features of randomized approximators should thus be considered with special care in applications.

There is one additional point that is inherent to all randomized approximators and yet it is rarely addressed in practice. This is a measure concentration effect that we will present and consider in detail in the next section.

\section{Randomized Data Approximation and Measure Concentration}\label{sec:main}

So far we considered the problem of data approximation and modelling from the function approximation point of view. Let us, however, look at this problem from a slightly different perspective (cf. \cite{Neural_Networks:Widrow:2013}). Suppose that the data points, $x$, are labeled by real numbers $t_i$ so that each $i$-th data point is uniquely determined by its label $t_i$. This means that the problem can be rephrased as that of representing a combined vector $y=(f(x(t_1)),f(x(t_2)),\dots,f(x(t_n)))$ by linear combinations of  $h_i=(g(w_i^{T}x(t_1)+b_i),g(w_i^T x(t_2)+b_i),\dots,g(w^T_i x(t_n)+b_i))$
so that the error
\[
y-\sum_{i=1}^{m} c_i h_i
\]
is minimized. If our choice of vectors $h_1,\dots,h_m$ is good enough to represent an arbitrary vector $y$ with acceptable precision then we can be assured that the above approximation model is viable. The questions, however, are
\begin{itemize}
\item[1) ] if the choice of $h_i$ is randomized then how many vectors one must choose to ensure desired universality and accuracy?
\item[2) ] how robust such representations are with respect to small perturbation of data?
\end{itemize}
These questions are addressed in the next sections. For simplicity we ignore that the vectors $h_i$ are in fact generated by functions $g(\cdot)$ and proceed assuming that they are simply sampled randomly in a hypercube.

\subsection{Linear dependence and independence and related elementary properties}\label{subsec:linear_dependence}

Let us first recall standard notions of linear dependence and independence.

\begin{defn}[Linear Dependence] A system of vectors $h_1, h_2, \dots, h_m$, $h_i\in\Real^n$, $i=1,\dots,m$  is said to be linearly dependent if there exist $c_1, c_2, \dots, c_m$, $c_i\in\Real$, $i=1,\dots,m$ such that
\[
h_1 c_1 + h_2 c_2 + \cdots +c_m h_m =0
\]
and  at least one of $c_1,c_2,\dots,c_m$ is not equal to zero.
\end{defn}
The same definition can be rewritten in the vector-matrix notation as
\begin{equation}\label{eq:linear_dependence}
\exists \ c\in\Real^m, \ c\neq 0: H c =0,
\end{equation}
where
\[
H=\left(h_1 \ h_2  \cdots \  h_m\right)
\]
is an $n\times m$ matrix formed by $h_1,\dots,h_m$.

\begin{defn}[Linear Independence]\label{defn:linear_independence} A system of vectors $h_1, h_2, \dots, h_m$, $h_i\in\Real^n$, $i=1,\dots,m$ is said to be linearly dependent if it is not linearly independent. In other words,
\begin{equation}\label{eq:linear_independence}
H c \neq 0 \ \forall \ c\in\Real^m: \ c\neq 0.
\end{equation}
\end{defn}

A very simple fact follows immediately from Definition \ref{defn:linear_independence}

\begin{prop}\label{prop:linear_indepencence} Consider a system of vectors $h_1,\dots,h_m$, and let $H=(h_1$ $h_2$  $\cdots$ $h_m)$. Let $\left|\cdot\right|_{\Real^n}$ be a norm on $\Real^n$, then the system $h_1,\dots,h_m$ is linearly independent iff there exists an $\varepsilon>0$ such that
\begin{equation}\label{eq:linear_independence_form}
\left|H x\right|_{\Real^n}>\varepsilon \ \forall \ x\in \mathcal{S}^{m-1}(1).
\end{equation}
\end{prop}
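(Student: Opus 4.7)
The plan is to handle the two implications of the iff separately, using only continuity of the map $x \mapsto |Hx|_{\Real^n}$ and compactness of the sphere $\Sphere^{m-1}(1)$.

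For the forward direction, assume $h_1,\dots,h_m$ are linearly independent. By Definition~\ref{defn:linear_independence}, $Hx \neq 0$ for every nonzero $x \in \Real^m$, and in particular for every $x \in \Sphere^{m-1}(1)$. Thus the function $\varphi : \Sphere^{m-1}(1) \to \Real$ defined by $\varphi(x) = |Hx|_{\Real^n}$ is strictly positive. Since $|\cdot|_{\Real^n}$ is a norm on $\Real^n$, it is continuous (any norm on a finite-dimensional space is continuous with respect to the Euclidean topology), and the linear map $x \mapsto Hx$ is continuous, so $\varphi$ is continuous. The sphere $\Sphere^{m-1}(1)$ is compact in $\Real^m$, so $\varphi$ attains its infimum at some $x^{\ast} \in \Sphere^{m-1}(1)$. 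Setting $\varepsilon = \tfrac{1}{2}\varphi(x^{\ast}) > 0$ gives $|Hx|_{\Real^n} > \varepsilon$ for all $x \in \Sphere^{m-1}(1)$, as required.

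For the reverse direction, suppose an $\varepsilon > 0$ exists such that (\ref{eq:linear_independence_form}) holds. Take any $c \in \Real^m$ with $c \neq 0$ and set $x = c/\|c\|$, so that $x \in \Sphere^{m-1}(1)$. By hypothesis $|Hx|_{\Real^n} > \varepsilon > 0$, and by linearity $|Hc|_{\Real^n} = \|c\| \cdot |Hx|_{\Real^n} > 0$. In particular $Hc \neq 0$, which is exactly the condition (\ref{eq:linear_independence}) for linear independence.

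There is no real obstacle here; the only subtlety worth flagging is that $|\cdot|_{\Real^n}$ is an arbitrary norm, not necessarily the Euclidean one used to define $\Sphere^{m-1}(1)$. This causes no problem because every norm on a finite-dimensional space is continuous (equivalently, controlled above and below by a Euclidean norm), so the minimum argument on the compact sphere still applies. All remaining steps are routine.
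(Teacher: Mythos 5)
Your proof is correct and follows essentially the same route as the paper: continuity of $x\mapsto|Hx|_{\Real^n}$ plus compactness of $\Sphere^{m-1}(1)$ for the forward direction, and homogeneity of the norm for the converse. The only (cosmetic) difference is that you take $\varepsilon$ to be half the attained minimum, which cleanly yields the strict inequality, whereas the paper sets $\varepsilon$ equal to the minimum itself.
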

\begin{proof} Suppose first that the system $h_1,\dots,h_m$ is linearly  independent. Hence (\ref{eq:linear_independence}) holds, and given that $x\neq 0$ for all $x\in\mathcal{S}^{m-1}(1)$ we thus obtain that $\left|H x\right|_{\Real^n}>0$ for all $x\in\mathcal{S}^{m-1}(1)$. Noticing that $\left|\cdot\right|_{\Real^n}$ is continuous and $\mathcal{S}^{m-1}(1)$ is compact we conclude that $\left|\cdot\right|_{\Real^n}$  takes minimal and maximal values on $\mathcal{S}^{m-1}(1)$. Let
\[
\varepsilon=\min_{x\in\mathcal{S}^{m-1}(1)} \left|Hx\right|_{\Real^n}.
\]
The minimum of $\left|\cdot\right|_{\Real^n}$ on $\mathcal{S}^{m-1}(1)$ is separated away from $0$ since otherwise there {\color{black} exists} a $x\in\mathcal{S}^{m-1}(1)$ such that $Hx=0$. Thus (\ref{eq:linear_independence_form}) holds.

Let us now show that (\ref{eq:linear_independence_form}) implies (\ref{eq:linear_independence}). Indeed, for any $c\in\Real^m$, $c\neq 0$ there is an $x\in\mathcal{S}^{m-1}(1)$ and an $\alpha\in\Real$, $\alpha\neq 0$ such that $c=\alpha x$.  Hence $\left|Hc\right|_{\Real^n}=|\alpha|\left|Hx\right|_{\Real^n}>|\alpha|\varepsilon > 0$, which automatically assures that (\ref{eq:linear_independence}) holds.
\end{proof}

\subsection{Quantification of linear independence}\label{subsec:quantification}

Standard notions of linear dependence and independence are, unfortunately, not always easy to assess numerically when the values of $\varepsilon$ in (\ref{eq:linear_independence_form}) are small. {\color{black} Further}, checking that for a given system of vectors $h_1,\dots,h_m$, some $\varepsilon$, and all $x\in\Sphere^{m-1}(1)$ the following holds
\begin{equation}\label{eq:linear_independence_relation}
\left|Hx\right|_{\Real^n}>\varepsilon
\end{equation}
may not always be {\color{black} feasible or desirable.}

Two ways to relax and quantify the conventional notion of linear independence are obviated in Proposition \ref{prop:linear_indepencence}. These are 1) the  value of $\varepsilon$ in (\ref{eq:linear_independence_form}), and 2) a possibility of introducing a finite measure on $\Sphere^{m-1}(1)$  that determines a proportion of $x$ from $\Sphere^{m-1}(1)$ which satisfy (\ref{eq:linear_independence_relation}).

\begin{defn}[Almost Linear Independence]\label{defn:almost_linear_independence} Let $h_1,\dots,h_m$ be a system of $m$ normalized vectors from $\Real^n$: $\left|h_i\right|_{\Real^n}=1$, $i=1,\dots,m$.  We will say that the system is $(\varepsilon,\theta)$-linearly independent (almost linearly independent) with respect to $\mu$ if
\begin{equation}\label{eq:almost_linear_independence}
\mu(\{ x \in\Sphere^{m-1}(1) \ | \ \left|H x\right|_{\Real^n}\geq\varepsilon\})\geq 1-\theta.
\end{equation}
\end{defn}

Similarly, one can define an alternative quantification of linear {\color{black} dependence}:

\begin{defn}[Almost Linear Dependence]\label{defn:almost_linear_dependence} Let $h_1,\dots,h_m$ be a system of $m$ normalized vectors from $\Real^n$: $\left|h_i\right|_{\Real^n}=1$, $i=1,\dots,m$. We will say that the system is $(\varepsilon,\theta)$-linearly dependent (almost linearly {\color{black}dependent}) with respect to $\mu$ if
\[
\mu(\{ x \in\Sphere^{m-1}(1) \ | \ \left|H x\right|_{\Real^n}\leq \varepsilon\})\geq 1-\theta.
\]
\end{defn}

Notice that the notions of almost linear independence and almost linear dependence introduced in Definitions \ref{defn:almost_linear_independence}, \ref{defn:almost_linear_dependence} are consistent with conventional notions in the sense that the latter can be viewed as limiting cases of the former. Indeed, if $\mu$ is a surface area then setting $\theta=0$ in Definition \ref{defn:almost_linear_independence} and picking $\varepsilon$ small enough one obtains an equivalent of Definition \ref{defn:linear_independence}.

The above measure, or ``probabilistic'', quantification of linear independence has significant implications for data representation in applications. As we shall see in the next sections two seemingly exclusive extremes are likely to hold in higher dimensions. First, almost all points of an $n$-ball concentrate in an $\epsilon$-thickening of an $n-1$-disc. This means that for $m$ sufficiently large a family of randomly and independently chosen vectors $h_1,h_2,\dots,h_m$ becomes almost linearly dependent. This phenomenon is well-known in the literature (see e.g. \cite{Gromov:1999}, \cite{GAFA:Gromov:2003}, \cite{Gorban:2006}
 as well as classical works of Maxwell, Levy, and Gibbs; in Data Mining applications measure concentration effects have been discussed in \cite{Pestov}).  Yet, the values of $m$ for which almost linear independence persists may be exponentially large. Furthermore, the latter situation holds with a probability that is close to one. This second extreme can be rephrased as that the number of almost orthogonal vectors grows exponentially with dimension. More formal statements and justifications are provided in Propositions \ref{prop:balls_concentration}, \ref{prop:disc_concentration} below.

\subsection{Measure concentration}\label{subsec:measure_concentration}

We begin with the following statement

\begin{prop}\label{prop:balls_concentration} {\color{black} Let $\Ball^n(R)$ be an $n$-ball of radius $R$ in $\Real^n$, and $0<\delta<R$. Then} the following estimate holds:
\begin{equation}\label{eq:balls_concentration}
 \frac{\Volume(\Ball^n(R))-{\color{black}\Volume(}\Ball^n(R-\delta))}{{\color{black}\Volume}(\Ball^n(R))} {\color{black}>} 1-e^{-\frac{n \delta}{R}}.
\end{equation}
\end{prop}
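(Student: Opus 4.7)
The plan is to reduce the claim to a one-variable elementary inequality using the scaling behavior of Lebesgue volume in $\Real^n$.

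First I would exploit homogeneity: since $\Ball^n(r)$ is the image of $\Ball^n(1)$ under the dilation $x\mapsto rx$, we have $\Volume(\Ball^n(r)) = r^n \Volume(\Ball^n(1))$. Applying this with $r=R$ and $r=R-\delta$, the ratio of volumes telescopes to
\[
\frac{\Volume(\Ball^n(R-\delta))}{\Volume(\Ball^n(R))} = \left(\frac{R-\delta}{R}\right)^n = \left(1 - \frac{\delta}{R}\right)^n,
\]
so that the left-hand side of (\ref{eq:balls_concentration}) equals $1 - (1-\delta/R)^n$. The desired estimate is therefore equivalent to the pointwise inequality
\[
\left(1 - \frac{\delta}{R}\right)^n < e^{-n\delta/R}.
\]

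Next I would invoke the standard elementary inequality $1 - t < e^{-t}$ valid for every $t\in(0,1]$ (this follows, e.g., from strict convexity of $e^{-t}$ together with equality of values and derivatives at $t=0$, or from truncating the Taylor series). Setting $t=\delta/R\in(0,1)$, which is admissible by the hypothesis $0<\delta<R$, and raising both positive sides to the $n$-th power preserves the strict inequality, yielding $(1-\delta/R)^n < e^{-n\delta/R}$ as required.

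The argument is essentially a one-line calculation once the volume scaling is used, so I do not foresee any genuine obstacle; the only point that deserves a line of justification is the strictness of $1-t<e^{-t}$ on $(0,1)$, which is what produces the strict ``$>$'' in the statement. Combining the two ingredients gives (\ref{eq:balls_concentration}) directly.
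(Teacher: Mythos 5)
Your proof is correct and follows essentially the same route as the paper: reduce via the scaling $\Volume(\Ball^n(r))=r^n\Volume(\Ball^n(1))$ to the inequality $(1-\delta/R)^n<e^{-n\delta/R}$, and then deduce that from the strict convexity bound $1-t<e^{-t}$ on $(0,1)$. The paper merely packages this as the right half of a two-sided inequality (the left half being used only for the later accuracy remark), so there is nothing to add.
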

\begin{proof} Noticing that $\Volume(\Ball^n(R))=C_n R^{n}$, where $C_n$ is a constant independent on $R$ we conclude that
\[
\begin{split}
&\frac{\Volume(\Ball^n(R))-\Volume(\Ball^n(R-\delta))}{\Volume(\Ball^n(R))}=1-\frac{(R-\delta)^n}{R^n}=1- \left(1-\frac{\delta}{R}\right)^n.
\end{split}
\]
{\color{black} Next we recall that the following elementary inequality holds for all $0<x<1$:
\begin{equation}\label{eq:exp_inequality}
(1-x)\frac{1}{e}<(1-x)^{\frac{1}{x}}<\frac{1}{e}.
\end{equation}
Indeed, with respect to the right part of (\ref{eq:exp_inequality}), $(1-x)^{\frac{1}{x}}<{e^{-1}}$, we notice that
$(1-x)^{\frac{1}{x}} < e^{-1} \Leftrightarrow 1-x < e^{-x}$. The function $y=e^{-x}$ is strictly convex on $\Real$, and $y=1-x$ is its first-order Taylor approximation at $x=0$. Thus that $(1-x)^{\frac{1}{x}}<\frac{1}{e}$ holds true for $0<x<1$ is the consequence of the strict convexity of the exponential $e^{-x}$.

In order to see that the left part of  (\ref{eq:exp_inequality}) holds true too, consider the following chain of equivalent inequalities for $0<x<1$:
\[
(1-x) e^{-1}< (1-x)^{\frac{1}{x}} \Leftrightarrow  e^{-x}<(1-x)^{1-x}  \Leftrightarrow -x < (1-x)\ln (1-x).
\]
Again, $y=(1-x)\ln (1-x)$ is strictly convex on $(-\infty,1)$, and $y=-x$ is its first Taylor approximation at $x=0$. Hence the left part of the inequality, $(1-x) e^{-1}< (1-x)^{\frac{1}{x}}$, is the consequence of strict convexity of $(1-x)\ln (1-x)$.

Using (\ref{eq:exp_inequality}) one can derive that
\[
\left[e\left(1-\frac{\delta}{R}\right)^{-1}\right]^{- \frac{n\delta}{R}}<\left(1-\frac{\delta}{R}\right)^n < e^{- \frac{n\delta}{R}}.
\]
Moreover,  for $\delta/R$ sufficiently small we obtain
\[
\left(1-\frac{\delta}{R}\right)^n \sim e^{-\frac{n \delta}{R}}
\]
with accuracy estimate following from (\ref{eq:exp_inequality}).}
\end{proof}

In accordance with Proposition \ref{prop:balls_concentration} the volume of an $n$-ball of radius $R$, for $n$ sufficiently large, is concentrated in a thin layer around its surface. Furthermore, in this thin layer the volume of an $n$-ball is concentrated around the largest equator of the corresponding $n-1$ sphere, $\Sphere^{n-1}(R)$.

\begin{prop}\label{prop:disc_concentration} {\color{black} Let $\Ball^n(R)$ be an $n$-ball of radius $R$ in $\Real^n$, and $0<\delta<R$. Let $\Disc^{n-1}_\delta (R)$ be a $\delta$-thickening of an $n-1$-disc $\Disc^{n-1}(R)$. Then}
\[
\frac{\Volume(\Ball^n(R))-\Volume(\Disc_{\delta}^{n-1}(R))}{\Volume(\Ball^n(R))} < {\color{black} e^{-\frac{n \delta^2}{2 R^2}}}.
\]
\end{prop}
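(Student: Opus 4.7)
The plan is to reduce the proposition to a one-dimensional integral inequality via Cavalieri's principle, and then establish the required exponential decay by a shift-and-rescale manipulation of the resulting integrals.

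First I would choose coordinates so that $\Disc^{n-1}(R)$ lies in the hyperplane $\{x_n=0\}$, making $\Disc^{n-1}_\delta(R) = \Ball^n(R)\cap\{|x_n|\le\delta\}$. Slicing by horizontal hyperplanes, each cross-section at height $t$ is an $(n-1)$-ball of radius $\sqrt{R^2-t^2}$, so using $\Volume(\Ball^{n-1}(r))=C_{n-1}r^{n-1}$ together with the symmetry $t\mapsto -t$, the prefactor $2C_{n-1}$ cancels from numerator and denominator on the left-hand side and, after rescaling $u=t/R$, the quantity to bound becomes
\[
\frac{\int_{\delta/R}^{1}(1-u^2)^{(n-1)/2}\,du}{\int_{0}^{1}(1-u^2)^{(n-1)/2}\,du}.
\]

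To handle this ratio I would apply a shift-and-rescale trick in the numerator. Writing $u=v+\delta/R$ and expanding $(v+\delta/R)^2 = v^2+2v(\delta/R)+(\delta/R)^2$, the cross term $2v(\delta/R)$ is nonnegative on $[0,1-\delta/R]$, so $(1-(v+\delta/R)^2)^{(n-1)/2}\le (1-v^2-(\delta/R)^2)^{(n-1)/2}$. Setting $a=\sqrt{1-(\delta/R)^2}$ and substituting $v=aw$ transforms the numerator into $a^{n}\int_{0}^{(1-\delta/R)/a}(1-w^2)^{(n-1)/2}\,dw$. Because $(1-\delta/R)/a = \sqrt{(1-\delta/R)/(1+\delta/R)}<1$, this integral is at most $a^{n}\int_{0}^{1}(1-w^2)^{(n-1)/2}\,dw$, which matches the denominator and cancels exactly, leaving the ratio at most $(1-\delta^2/R^2)^{n/2}$. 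A final application of the elementary inequality $1-x<e^{-x}$ for $x>0$ (already established via strict convexity in the proof of Proposition \ref{prop:balls_concentration}) with $x=\delta^2/R^2$, followed by raising to the $n/2$ power, yields $(1-\delta^2/R^2)^{n/2}<e^{-n\delta^2/(2R^2)}$, as required.

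The only real subtlety, and the reason the shift-and-rescale step is needed at all, is obtaining the sharp exponent $n$ rather than $n-1$: the naive bound $(1-u^2)^{(n-1)/2}\le e^{-(n-1)u^2/2}$ on the numerator followed by a Gaussian tail estimate and a Wallis-type lower bound on the denominator produces only the weaker exponent $(n-1)\delta^2/(2R^2)$ and leaves a residual non-unit prefactor. The shift manipulation above circumvents both problems at once, since it directly extracts the factor $a^{n}=(1-\delta^2/R^2)^{n/2}$ and cancels the denominator cleanly without any appeal to Wallis-type asymptotics.
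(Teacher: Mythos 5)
Your proof is correct and follows essentially the same route as the paper: both arguments reduce the claim to the bound $\left(1-\delta^2/R^2\right)^{n/2}$ on the relative volume of the two caps and then apply the elementary inequality $1-x<e^{-x}$. Your shift-and-rescale computation is in fact an explicit proof of the volume comparison $\Volume(\Ball^n(R))-\Volume(\Disc_{\delta}^{n-1}(R))\leq \Volume(\Ball^n(\sqrt{R^2-\delta^2}))$, which the paper asserts without justification, so your write-up is the same argument with the one nontrivial step filled in.
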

\begin{proof} Consider
\[
\begin{split}
&\frac{\Volume(\Ball^n(R))-\Volume(\Ball^n(\sqrt{R^2-\delta^2}))}{\Volume(\Ball^n(R))} =\\
& 1-\left(1-\frac{\delta^2}{R^2}\right)^{\frac{n}{2}}
\end{split}
\]
{\color{black} Using (\ref{eq:exp_inequality}) we obtain}
\[
1-\left(1-\frac{\delta^2}{R^2}\right)^{\frac{n}{2}} >  1 - e^{-\frac{n \delta^2}{2 R^2}},
\]
{\color{black} and the result follows automatically from}
\[
\Volume(\Ball^n(R))-\Volume(\Disc_{\delta}^{n-1}(R))\leq \Volume(\Ball^n(\sqrt{R^2-\delta^2})).
\]
\end{proof}

In high dimension the volume of the ball is concentrated in a thin layer near the sphere. Therefore, the estimate of the volume of the disk automatically provides an estimate of the surface of the corresponding waist of the sphere. Let us produce this estimate explicitly.
The {proportion} of $\Sphere^{n-1}(1)$ belonging to the cap (shaded part of the sphere in Fig. \ref{fig:balls_disks_cones}) equals the proportion
of the solid ball that lies in the corresponding spherical cone (cf. \cite{Ball}, Fig. 11). The latter consist of two parts: one is the cone of height $\delta$ and radius of the base $\sqrt{1-\delta^2}$. The volume of the second part can be bounded from above by the half of the volume of the ball with radius $\sqrt{1-\delta^2}$. If we use the Stirling formula for the volume of high-dimensional ball $V_n(R) \sim \frac{1}{\sqrt{n\pi}}\left(\frac{2\pi e}{n}\right)^{n/2}R^n$ then we obtain that the fraction of the waist of the width $2\delta$ is $1-(1+O(\delta /{\sqrt{n}}))e^{-\frac{n\delta^2}{2}}$. {\color{black} Indeed
\begin{equation}\label{eq:volumes}
\begin{split}
&V_n(R) = \frac{\pi^{n/2}}{\Gamma(\frac{n}{2} + 1)}R^n, \
V_n(R) = 2\pi\left(\frac{ R}{\sqrt{n}}\right)^2 V_{n-2}(R),\\
& V_n(R) = R\sqrt{\pi}\frac{\Gamma(\frac{n+1}{2})}{\Gamma(\frac{n+1}{2} + \frac{1}{2})} V_{n-1}(R).
\end{split}
\end{equation}
The estimate now follows from
$$\Gamma(x)= x^{x - \frac{1}{2}} e^{-x} \sqrt{2\pi} \left( 1 + \frac{1}{12x} + R_2(x)\right),$$
where the reminder $R_2(x)$ can be bounded as
\begin{equation}\label{eq:Stirling_reminder}
|R_2(x)|\leq \frac{1+\frac{1}{6}\pi^2}{2\pi^3 x^2}
\end{equation}
(see (3.11) from \cite{Boyd94} for details).} This estimate improves the textbook estimate $1-2 e^{-\frac{n\delta^2}{2}}$ for large $n$ \cite{Ball}.
\begin{figure}
\begin{center}
\subfigure[Covering of an $n$-ball]{
\includegraphics[width=150pt]{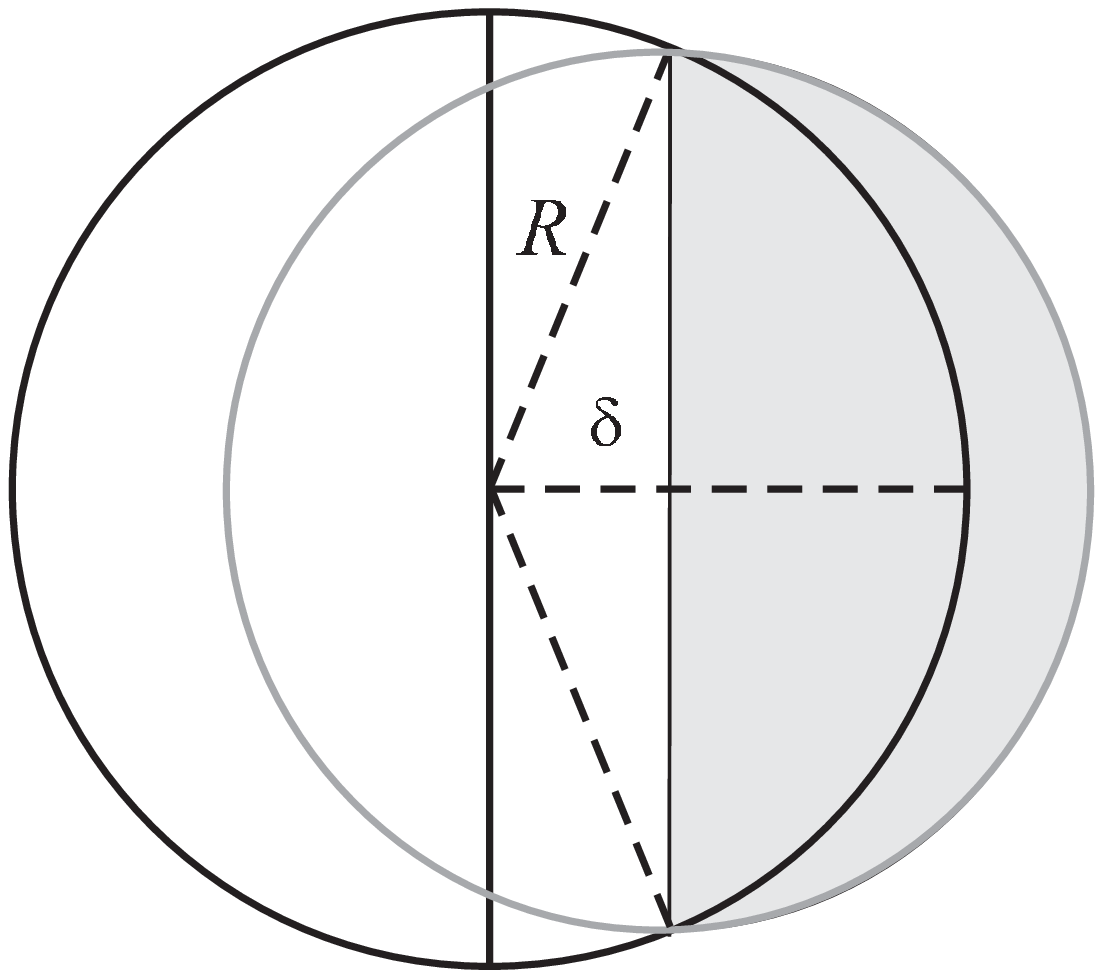}
}\\
\subfigure[$\mathcal{V}\sim\sqrt{\frac{2\pi}{n+1}} R V_{n-1}(R)$]{
\includegraphics[width=100pt]{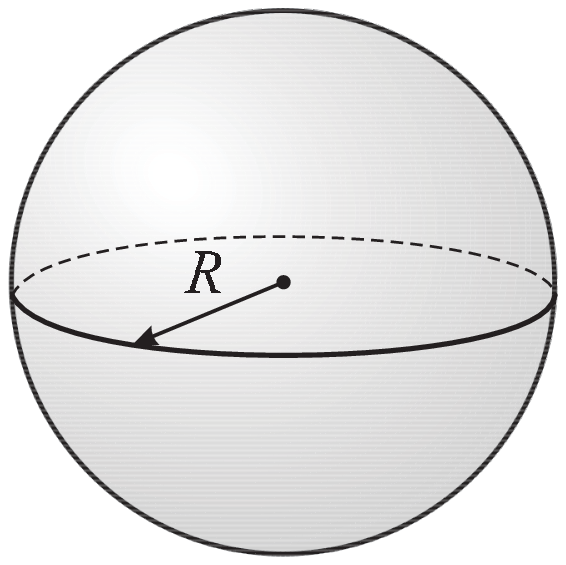}
}
\subfigure[$\mathcal{V}=\frac{1}{n} \delta V_{n-1}(R)$]{
\includegraphics[width=100pt]{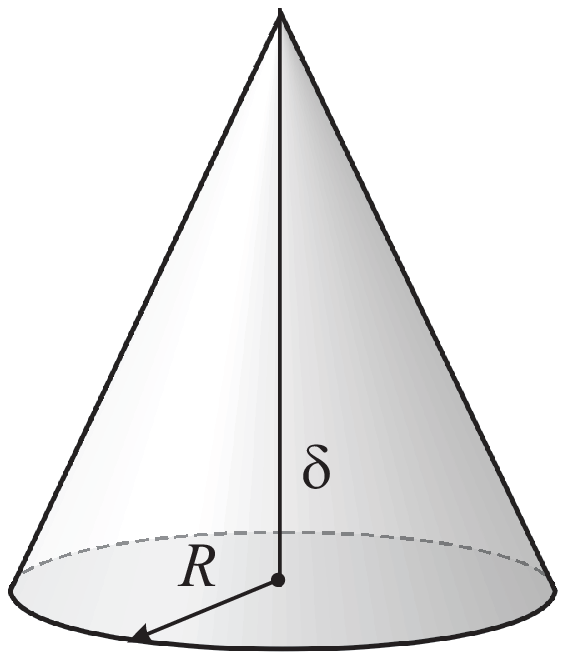}
}
\subfigure[$\mathcal{V}=\delta V_{n-1}(R)$]{
\includegraphics[width=100pt]{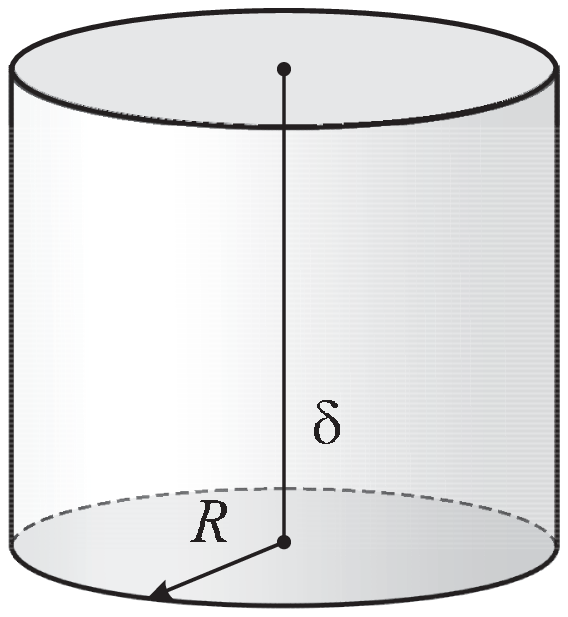}
}
\end{center}
\caption{Illustration to the estimate of the $2\delta$-width of the waist of the sphere. For (b), $n\gg 1$. More accurate estimate with evaluation of the reminder can be extracted from  (\ref{eq:volumes}), (\ref{eq:Stirling_reminder}). }\label{fig:balls_disks_cones}
\end{figure}

{\color{black} The concentration effect described in Proposition \ref{prop:disc_concentration} implies that in high dimensions nearly all independently and randomly drawn vectors will belong to a $\delta$-thickening of a set of vectors that are linearly dependent: the $n-1$-disc $\Disc^{n-1}(R)$.}  On the other hand a set of $n-1$ vectors with probability close to one spans almost all vectors. We note that the latter property holds for systems of $n-k$, $k>1$ vectors too which follows immediately from \cite{Artstein:2002}:

\begin{thm}[Theorem 3.1 in \cite{Artstein:2002}]\label{theorem:artstein} Let $E_k$ be a $k$-dimensional subspace of $\Real^n$, and denote by $\mu((E_k)_\varepsilon)$ the Haar measure on the sphere $\Sphere(1)$ of the set of points within a geodesic distance smaller than $\varepsilon$ of $E_k$. We write $k=\lambda n$. Fix $0<\varepsilon<\pi/2$ and $0<\lambda<1$. The following estimates hold as $n\rightarrow\infty$
\begin{itemize}
\item[(i)] If $\sin^2 \varepsilon > 1 -\lambda$, then
\[
\mu((E_k)_\varepsilon)\simeq 1 - \frac{1}{\sqrt{n\pi}} \frac{\sqrt{\lambda(1-\lambda)}}{\sin^2 \varepsilon - (1-\lambda)} e^{\frac{n}{2}u(\lambda,\varepsilon)}.
\]
\item[(ii)] If $\sin^2 \varepsilon< 1 -\lambda$, then
\[
\mu((E_k)_\varepsilon) \simeq \frac{1}{\sqrt{n\pi}} \frac{\sqrt{\lambda(1-\lambda)}}{(1-\lambda)- \sin^2 \varepsilon}
\]
where $u(\lambda,\varepsilon)=(1-\lambda)\log \frac{1-\lambda}{\sin^2 \varepsilon}+\lambda \log \frac{\lambda}{\cos^2 \varepsilon}$.
\end{itemize}
\end{thm}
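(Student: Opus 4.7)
The key reduction is geometric: for any $x\in\Sphere^{n-1}(1)$ the geodesic distance from $x$ to the subspace $E_k$ equals $\arcsin\|P_{E_k^\perp}x\|$, where $P_{E_k^\perp}$ denotes orthogonal projection onto the orthogonal complement of $E_k$. Hence
\[
(E_k)_\varepsilon=\{x\in\Sphere^{n-1}(1):\|P_{E_k^\perp}x\|^2\le\sin^2\varepsilon\}.
\]
A standard rotational-invariance argument shows that if $x$ is Haar-uniform on the sphere then $T:=\|P_{E_k^\perp}x\|^2$ has a $\mathrm{Beta}\!\left(\tfrac{n-k}{2},\tfrac{k}{2}\right)$ distribution, so
\[
\mu((E_k)_\varepsilon)=\frac{1}{B\!\left(\tfrac{n-k}{2},\tfrac{k}{2}\right)}\int_0^{\sin^2\varepsilon} t^{\frac{n-k}{2}-1}(1-t)^{\frac{k}{2}-1}\,dt.
\]
The proof is thus an asymptotic statement about the regularized incomplete Beta function with $k=\lambda n$.

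Write the integrand as $e^{n\phi(t)}/(t(1-t))$ with $\phi(t)=\tfrac{1-\lambda}{2}\log t+\tfrac{\lambda}{2}\log(1-t)$. The function $\phi$ is strictly concave with unique maximum at $t^{\ast}=1-\lambda$, and a direct computation shows $\phi(\sin^2\varepsilon)-\phi(1-\lambda)=-\tfrac{1}{2}u(\lambda,\varepsilon)$, which produces the large-deviation rate in the statement (essentially the KL divergence between Bernoulli$(1-\lambda)$ and Bernoulli$(\sin^2\varepsilon)$). Stirling's formula applied to the three Gamma functions in $B((1-\lambda)n/2,\lambda n/2)$ gives
\[
B\!\left(\tfrac{(1-\lambda)n}{2},\tfrac{\lambda n}{2}\right)\sim 2\sqrt{\frac{\pi}{n\lambda(1-\lambda)}}\,e^{n\phi(1-\lambda)},
\]
which supplies the $\sqrt{\lambda(1-\lambda)}$ and $1/\sqrt{n\pi}$ factors visible in both parts of the theorem once the normalization is divided out.

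The tail estimate itself is handled by one integration by parts. For case (ii), $\sin^2\varepsilon<1-\lambda$, the cutoff lies strictly to the left of the mode $t^{\ast}$, the integrand is monotone increasing on $(0,\sin^2\varepsilon]$, and writing $e^{n\phi(t)}/(t(1-t))=\tfrac{1}{n\phi'(t)t(1-t)}\cdot\tfrac{d}{dt}e^{n\phi(t)}$ yields
\[
\int_0^{\sin^2\varepsilon}\!\!\frac{e^{n\phi(t)}}{t(1-t)}\,dt\sim\frac{e^{n\phi(\sin^2\varepsilon)}}{n\,\phi'(\sin^2\varepsilon)\sin^2\varepsilon\cos^2\varepsilon},
\]
the boundary term at $t=0$ vanishing and the remaining integral being of lower order. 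Substituting $\phi'(\sin^2\varepsilon)=\tfrac{(1-\lambda)-\sin^2\varepsilon}{2\sin^2\varepsilon\cos^2\varepsilon}$, combining with the Stirling asymptotic for $B$, and collecting exponentials produces the claimed prefactor $\sqrt{\lambda(1-\lambda)}/((1-\lambda)-\sin^2\varepsilon)$ together with the factor $e^{-\tfrac{n}{2}u(\lambda,\varepsilon)}$. Case (i), $\sin^2\varepsilon>1-\lambda$, is handled symmetrically by applying the same one-step integration by parts to $1-\mu((E_k)_\varepsilon)=\Pr(T>\sin^2\varepsilon)$; the sign of $\phi'$ at the cutoff is reversed, yielding the factor $\sin^2\varepsilon-(1-\lambda)$ in the denominator.

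The exponential rate is the easy part, following at once from the identification of $\phi$ and one application of Stirling. The delicate step is the prefactor bookkeeping: because the cutoff $\sin^2\varepsilon$ is bounded away from the mode $t^{\ast}=1-\lambda$, the usual Gaussian-at-interior-maximum Laplace formula is inapplicable, and the correct asymptotic is governed by the endpoint, which contributes a factor $1/n$ (not $1/\sqrt{n}$). Combining this $1/n$ with the $\sqrt{n}$ emerging from the Stirling expansion of the Beta normalization is precisely what produces the $1/\sqrt{n\pi}$ scaling, and tracking the $\sqrt{\lambda(1-\lambda)}$ and $|\phi'(\sin^2\varepsilon)|^{-1}$ factors carefully through this cancellation to match the published coefficient is where I would expect the main technical effort to lie; everything else is routine once the problem is cast as an incomplete-Beta tail.
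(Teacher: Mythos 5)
The paper itself offers no proof of this statement: it is imported verbatim as Theorem 3.1 of the cited Artstein (2002) reference and used as a black box, so there is no internal argument to compare yours against. Your reconstruction is correct and is essentially the canonical route (and, as far as I can tell, the one taken in the source): the identity $(E_k)_\varepsilon=\{x\in\Sphere^{n-1}(1):\|P_{E_k^\perp}x\|^2\le\sin^2\varepsilon\}$ together with the $\mathrm{Beta}\bigl(\tfrac{n-k}{2},\tfrac{k}{2}\bigr)$ law of $\|P_{E_k^\perp}x\|^2$ reduces everything to endpoint asymptotics of a regularized incomplete beta function, and your bookkeeping checks out: $\phi(\sin^2\varepsilon)-\phi(1-\lambda)=-\tfrac{1}{2}u(\lambda,\varepsilon)$, the Stirling expansion $B\bigl(\tfrac{(1-\lambda)n}{2},\tfrac{\lambda n}{2}\bigr)\sim 2\sqrt{\pi/(n\lambda(1-\lambda))}\,e^{n\phi(1-\lambda)}$, and $\phi'(c)\,c(1-c)=\tfrac{1}{2}\bigl((1-\lambda)-c\bigr)$ at $c=\sin^2\varepsilon$ combine to give exactly the claimed prefactors. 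Two remarks. First, your derivation exposes what must be transcription errors in the statement as printed here: since $u(\lambda,\varepsilon)$ is the Kullback--Leibler divergence between $\mathrm{Bernoulli}(1-\lambda)$ and $\mathrm{Bernoulli}(\sin^2\varepsilon)$, it is strictly positive off the critical case $\sin^2\varepsilon=1-\lambda$, so the exponent in (i) should read $-\tfrac{n}{2}u(\lambda,\varepsilon)$ (as printed, the right-hand side diverges to $-\infty$), and (ii) is missing the factor $e^{-\frac{n}{2}u(\lambda,\varepsilon)}$ altogether (without it the measure would not decay, contradicting the concentration the theorem expresses); your formulas recover the correct versions. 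Second, the one step you pass over --- that after a single integration by parts the remaining integral is of lower order --- deserves a line of justification: since the cutoff $\sin^2\varepsilon$ is at fixed distance from the critical point $1-\lambda$, the factor $1/(n\phi'(t)t(1-t))$ and its derivative are $O(1/n)$ uniformly near the endpoint, so either a second integration by parts or a crude monotonicity bound closes the gap. With that sentence added, the proof is complete.
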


Since nearly all vectors in $\Ball^n(1)$ are concentrated in an $\varepsilon$-disc it is interesting to know how many pairwise almost orthogonal vectors can be found in this set. It turns out that this number is exponentially large. Detailed analysis and derivations are provided in the next section.

\subsection{Almost orthogonality in high dimensions}\label{subsec:exponential_dimension}

Select two small positive numbers, $\epsilon$ and $\vartheta$. Let us generate randomly and independently $N$ vectors $x_1,\ldots, x_N$ on $\Sphere^{n-1}(1)$. We are interested in the probability $P$ that all $N$ random vectors are pairwise $\epsilon$-orthogonal, i.e. $|(x_i,x_j)|<\epsilon$ for $i,j=1,\ldots,N$, $i\neq j$. For which $N$ this $P>1-\vartheta$?

Propositions  \ref{prop:balls_concentration}, \ref{prop:disc_concentration} {\color{black} and subsequent discussion} suggest that for $n\gg 1$ almost all volume of an $n$-ball $\Ball^n (1)$ is concentrated in an $\epsilon$-thickening of its largest equator. {\color{black} Moreover} for an arbitrarily chosen point $p$ on the surface of $\Ball^n (1)$ almost all points of the ball belong to {\color{black} the set $\mathcal{DC}^{n-1}_\epsilon(1)$ comprising of the difference} of $\Disc^{n-1}_\epsilon (1)$ {\color{black} with the corresponding spherical cones (see Fig. \ref{fig:balls_disks_cones})}. For the given choice of $p$  and ${\color{black}\mathcal{DC}}^{n-1}_\epsilon(1)$ the following {\color{black} estimate} holds
\[
 |\cos(\angle(p,x))| \leq \epsilon \ \forall \ x\in {\color{black} \mathcal{DC}}^{n-1}_\epsilon(1).
\]
The latter property {\color{black} means} that the vector $p$ is almost orthogonal to {\color{black} nearly all remaining} points in $\Ball^n (1)$.

{\color{black} Another way of looking at this could be as follows.} In accordance with Proposition \ref{prop:balls_concentration}, almost all points of the ball $\Ball^n(1)$ are concentrated around a $\varepsilon$-thickening of surface. {\color{black} At the same time they are also concentrated in $\Disc^{n-1}_\epsilon (1)$}. Lengths of such points, $x$, satisfy $1-\varepsilon\leq \|x\|\leq 1$, and hence
\[
|\cos(\angle(p,x))|\leq\frac{|p^T x|}{1 -\varepsilon}\leq \frac{\epsilon}{1-\varepsilon}.
\]

Let us determine the number of independent random vectors which are pairwise $\epsilon$-orthogonal with probability $1 - \vartheta$. The volume taken by all vectors that are almost orthogonal to a given vector on a unit sphere can be estimated from Proposition \ref{prop:disc_concentration} (up to a small correction term $O({\delta e^{-\frac{n\delta^2}{2}}}/{\sqrt{n}})$). Consider the following products
\begin{equation}\label{eq:almost_orthogonal}
P(\epsilon,N)=\prod_{k=1}^{N}\left(1 - k e^{-\frac{n \epsilon^2}{2}}\right).
\end{equation}
The value of $P(\epsilon, N)$ is an estimate from below of the probability of a set of $N+1$ independent random vectors to be pairwise $\epsilon$-orthogonal. Indeed, for one vector $h_1$ the fraction of the vectors which are {\it not} $\epsilon$-orthogonal to $h_1$ is evaluated as $e^{-n\delta^2}$. Therefore, for $k$ vectors $h_1, \ldots, h_k$, the fraction of the vectors which are not  $\epsilon$-orthogonal to $h_1, \ldots, h_k$ is at most $ke^{-n\delta^2}$. The probability to select randomly a vector $h_{k+1}$, which is $\epsilon$-orthogonal to $h_1, \ldots, h_k$, is higher than $1-ke^{-n\delta^2}$. The vectors are selected independently, therefore we have the estimate (\ref{eq:almost_orthogonal}).


{\color{black} The value of $P(\epsilon,N)$ in (\ref{eq:almost_orthogonal}) can be estimated as follows}. For
$N e^{-\frac{n \epsilon^2}{2}} < 1$:
\begin{equation}\label{eq:almost_orthogonal:1}
\begin{split}
P(\epsilon,N)>\left(1 - N e^{-\frac{n \epsilon^2}{2}}\right)^N & \sim e^{-N^2 e^{-\frac{n\epsilon^2}{2}}}.
\end{split}
\end{equation}
According to (\ref{eq:almost_orthogonal:1}), if $P(\epsilon,N)$ is set to be exceed a certain value, $1-\vartheta$, the number of pairwise almost orthogonal vectors in $\Ball^n (1)$ will have the following asymptotic estimate: For
\begin{equation}\label{eq:exponential_dimension}\boxed{
 N\leq e^{\frac{\epsilon^2 n}{4}} \left[\log\left(\frac{1}{1-\vartheta}\right)\right]^{\frac{1}{2}}}
\end{equation}
all random vectors $h_1,\ldots, h_{N+1}$ are pairwise $\epsilon$-orthogonal with probability $P>1-\vartheta$.

Estimate (\ref{eq:almost_orthogonal:1}) of (\ref{eq:almost_orthogonal}) can be refined if we apply $\log $ to the right hand side of (\ref{eq:almost_orthogonal})
\[
\log P(\epsilon,N) = \sum_{k=1}^{N} \log \left(1 - k e^{-\frac{n \epsilon^2}{2}} \right),
\]
end estimate the above sum using the integral
\[
J(z)=\int_{0}^{z} \log (1- x r) dx=\frac{rz-1}{r} \log(1-r z) - z , \ r= e^{-\frac{n \epsilon^2}{2}}
\]
Since $\log P(\epsilon,N)$ is monotone for $e^{\frac{n \epsilon^2}{2}}>N\geq 1$ we can conclude that $J(N+1) \leq \log P(\epsilon,N) \leq J(N)$. Furthermore, given that
\[
 \log(1-x)\leq -\frac{2x}{2-x} \ \mathrm{for} \ \mathrm{all} \ x\in[0,1]
 \]
 the following holds
\[
J(z) \geq  \frac{2z(rz-1)}{rz-2} - z
\]
for all $zr\in[0,1]$. Hence
\[
\frac{2 (N+1) (r (N+1)-1)}{r (N+1)-2} - (N+1) \leq P(\epsilon,N)=\log (1-\vartheta).
\]
Transforming this equation into quadratic
\[
r (N+1)^2 - \log(1-\vartheta) r (N+1) + 2 \log (1-\vartheta)\leq 0
\]
and solving for $N$ results in
\begin{equation}\label{eq:improved_estimate}
\boxed{N\leq \sqrt{\frac{\log^2(1-\vartheta)}{4} + 2 \log\frac{1}{1-\vartheta} e^{\frac{n \epsilon^2}{2}}} + \frac{\log(1-\vartheta)}{2}}
\end{equation}
Notice that the refined estimate (\ref{eq:improved_estimate}) has asymptotic exponential rate of order $e^{\epsilon^2n/4}$  which is identical to the one derived in (\ref{eq:exponential_dimension}).

\section{Discussion}\label{sec:discussion}

Estimate (\ref{eq:exponential_dimension}), (\ref{eq:improved_estimate}) derived in the previous section suggests that, for  $\vartheta$ sufficiently small  a set of $N$ randomly and independently chosen vectors in $\Ball^n(1)$ will be pairwise $\epsilon$-orthogonal with probability $1-\vartheta$ for
\[
N\lesssim  e^{\frac{\epsilon^2 n}{4}} \vartheta^{\frac{1}{2}}
\]
Such exponential bound enables us to explain some apparent controversy \cite{Tyukin:2009} regarding convergence rates of approximation schemes based on iterative greedy approximation \cite{Jones:1992}, \cite{Barron}, and randomized choice of functions advocated in \cite{igelpao95}, \cite{Rahimi08}, \cite{Rahimi08:2}.

Both greedy approximation and systems of randomized basis functions enjoy the convergence rate of order $1/N^{1/2}$ in $L_2$-norm irrespective of dimensionality of the domain on which a function that is being approximated is defined. Greedy approximation, however, requires solving nonlinear optimization problems at each step. In randomized {\color{black} strategies}, parameters of the basis functions/kernels are randomly drawn from a given set. This transforms the original problem into a much simpler linear one.

Notice, however, that the constant $C$ in the error rate  $O(C/N^{1/2})$ can become rather large in these schemes. If, for example kernels $\phi(x,\omega)$ in Theorem \ref{theorem:Rahimi} approach  Dirac delta-functions, then the constant $C$ in (\ref{eq:error_random:2}) becomes
arbitrarily large. The same problem may occur for the error rate (\ref{eq:error_random:1.5}). This means that practical application of randomized approximation schemes is to be preceded by additional analysis of the class of functions being approximated. Second,  achievable rate in these schemes is in general dependent on the values of $\delta$, determining probability of success (cf. (\ref{eq:error_random:2}); see our comment on Monte-Carlo convergence in probability too). The smaller the value of $\delta$ the higher is the probability of achieving the rate $O(C/N^{1/2})$. That rate, however, is proportional to a strictly monotone function of $1/\delta$. This might be acceptable in many practical applications. Nevertheless, if particularly high accuracy is desirable one needs to take this into account.

An insight into the slowing-down of convergence of randomized approximators due to either increased $C$ or due to the need to keep $\delta$ small can be gained by considering measure concentration effects discussed in Propositions \ref{prop:balls_concentration}, \ref{prop:disc_concentration}, Theorem \ref{theorem:artstein} and in Section \ref{subsec:exponential_dimension}. Indeed, in accordance with the analysis presented a system of $n-k$, $k\geq 1$ vectors spans almost all vectors in a ball $\Ball^n(1)$. This system, however, is almost linearly dependent. This means that the coefficients in corresponding linear combinations could become arbitrary large. Large coefficients, in turn, make the representation sensitive to small inaccuracies which is of course generally not very desirable. This observation of typicality of large coefficients in randomized choice of relatively short bases is consistent with our earlier remarks about growth of $C$ and influence of $1/\delta$.
On the other hand, if a representation of data is needed in which all coefficients are to be within certain bounds then the number of randomized basis vectors may become exponentially large.

To overcome potential danger of exponential growth of the number of elements needed to achieve acceptable level of performance of a randomized approximator one may constrain dimensionality of randomization by ``binning'' or partitioning the space of original problem into a set of spaces of smaller {\color{black} dimensions}. An example of such an optimization would be to use e.g. multi-scale basis functions, followed by randomization at each scale. Cascading, multi-grid, frequency separation, localization could all be used for the same purposes. The achieved efficiency will of course be determined by suitability of each of these ``binning'' approaches for a problem at hand. {\color{black} Of course such a binning will violate isotropy of the space. Violation of isotropy can make random approximation much more efficient. For example \cite{Tao} proved that  if the vectors of signals are sparse in a {\it fixed basis}, then it is possible to reconstruct these signals  with very high accuracy from a small
number of random measurements.}

\section{Examples}\label{sec:Example}

\subsection{Error convergence  rate. Deterministic vs Randomized approaches}

In order to illustrate the main difference between greedy and RVFL
approximators, we consider the following example in which a simple
function is approximated by both methods, greedy approximation
(\ref{eq:approximation})--(\ref{eq:Jones_iteration}) and
approximation based on randomized choice of bases. Let $f(x)$ be
defined as follows:
\[
f(x)=0.2 e^{-(10x-4)^2} + 0.5e^{-(80x-40)^2}+0.3e^{-(80x-20)^2}
\]
The function $f(x)$ is shown in Fig. \ref{fig:example}, top
panel. Clearly, $f(\cdot)$
belongs to the convex hull of $G$, and hence to its closure.

First, we implemented greedy approximation
(\ref{eq:approximation})--(\ref{eq:Jones_iteration}) in which we
searched for $g_n$ in the following set of functions
\[
{\color{black}\mathcal{G}}=\{e^{-(w^T x +
b)^2}\},
\]
where $w\in[0,200]$, $b\in[-100,0]$. The procedure for constructing $f_n$ was as follows.
Assuming $f_0(x)=0$, $e_0=-f$  we started with searching for $w_1$, $b_1$ such that
\begin{equation}\label{eq:exaple_greedy:1}
\begin{split}
&\langle0-f(x),g(w_1x+b)-f(x)\rangle=\\
&-\langle f(x),g(w_1x+b)\rangle+\|f(x)\|^2 < \varepsilon.
\end{split}
\end{equation}
where $\varepsilon$ was set to be small ($\varepsilon=10^{-6}$ in
our case). When searching for a solution of
(\ref{eq:exaple_greedy:1}) (which exists because the function $f$ is
in the convex hull of $G$ \cite{Jones:1992}), we did not utilize any
specific optimization routine. We sampled the space of parameters
$w_i$, $b_i$ randomly and picked the first values of $w_i$, $b_i$
which satisfy (\ref{eq:exaple_greedy:1}). Integral
(\ref{eq:exaple_greedy:1}) was evaluated in quadratures over a
uniform grid of $1000$ points in $[0,1]$.

The values of $\alpha_1$ and the function $f_1$ were chosen in
accordance with (\ref{eq:Jones_iteration}) with $M''=2$, $M'=1.5$
(these values are chosen to assure $M''>M'> \sup_{{\color{black} g\in\mathcal{G}}} \|g\|+\|f\|$).
The iteration was repeated, resulting in the following sequence of
functions
\[
\begin{split}
f_n(x)&=\sum_{i=1}^{\color{black}N} c_i g(w_i^{T}x+b_i), \\ c_i&=\alpha_i(1-\alpha_{i+1})(1-\alpha_{i+2})\cdots(1-\alpha_{{\color{black} N}})
\end{split}
\]

Evolution of the normalized approximation error
\begin{equation}\label{eq:exaple_error_normalized}
\bar{e}_{\color{black}N}=\frac{e_{\color{black}N}^2}{\|f\|^2}=\frac{\|f_{\color{black} N}-f\|^2}{\|f\|^2}
\end{equation}
for $100$ trials is shown in Fig. \ref{fig:example} (middle panel).
Each trial consisted of $100$ iterations
(\ref{eq:approximation})--(\ref{eq:Jones_iteration}), thus leading
to the networks of $100$ elements at the $100$th step. We observe
that the values of $\bar{e}_{\color{black} N}$ monotonically decrease as $O(1/{\color{black} N})$,
with the behavior of this approximation procedure consistent across
trials.


Second, we implemented an approximator based on the Monte-Carlo
integration.  At the ${\color{black} N}$th step of the approximation procedure we
pick randomly an element from $G$, where $w\in[0,200]$,
$b\in[-200,200]$ (uniform distribution). After an element is
selected, we add it to the current pool of basis functions
\[
\{g(w_1^Tx+b_1),\dots,g(w_{{\color{black} N}-1}x+b_{{\color{black} N}-1})\}.
\]
Then the weights $c_i$ in the superposition
\[
f_{\color{black} N}=\sum_{i=1}^{\color{black} N} c_i g(w_i^Tx+b_i)
\]
are optimized so that $\|f_{\color{black} N}-f\|\rightarrow\min$. Evolution of the
normalized approximation error $\bar{e}_{\color{black} N}$
(\ref{eq:exaple_error_normalized}) over $100$ trials is shown in
Fig. \ref{fig:example} (panel (c)). As can be observed from the
figure,  even though the values of $\bar{e}_{\color{black} N}$ form a monotonically
decreasing sequence, they are far from $1/{\color{black} N}$, at least for $1\leq
{\color{black}N}\leq 100$. Behavior across trials is not consistent, at least for
the networks smaller than $100$ elements, as indicated by a
significant spread among the curves.

\begin{figure}
\begin{center}
\includegraphics[width=150pt]{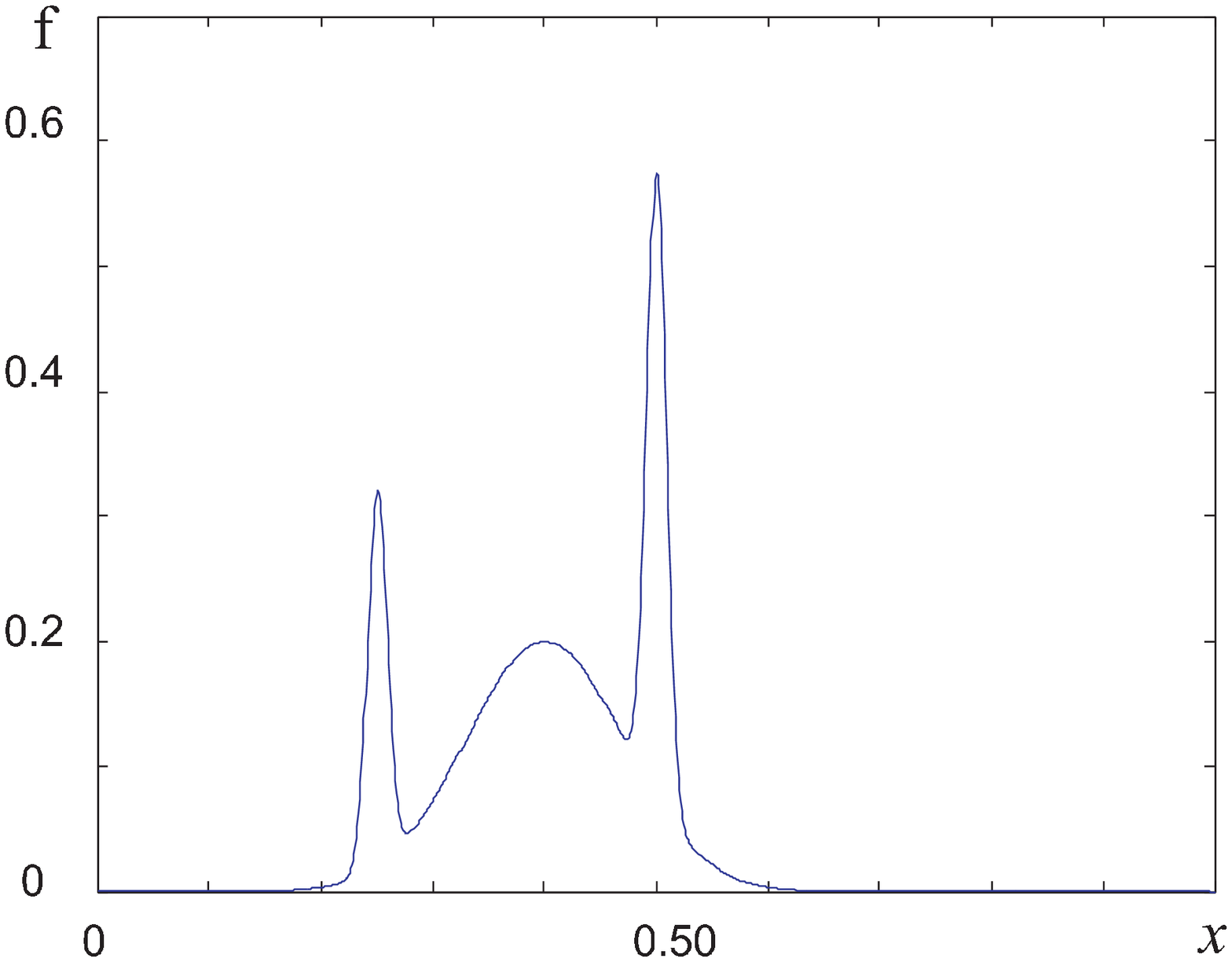} \hspace{5mm}
\includegraphics[width=155pt]{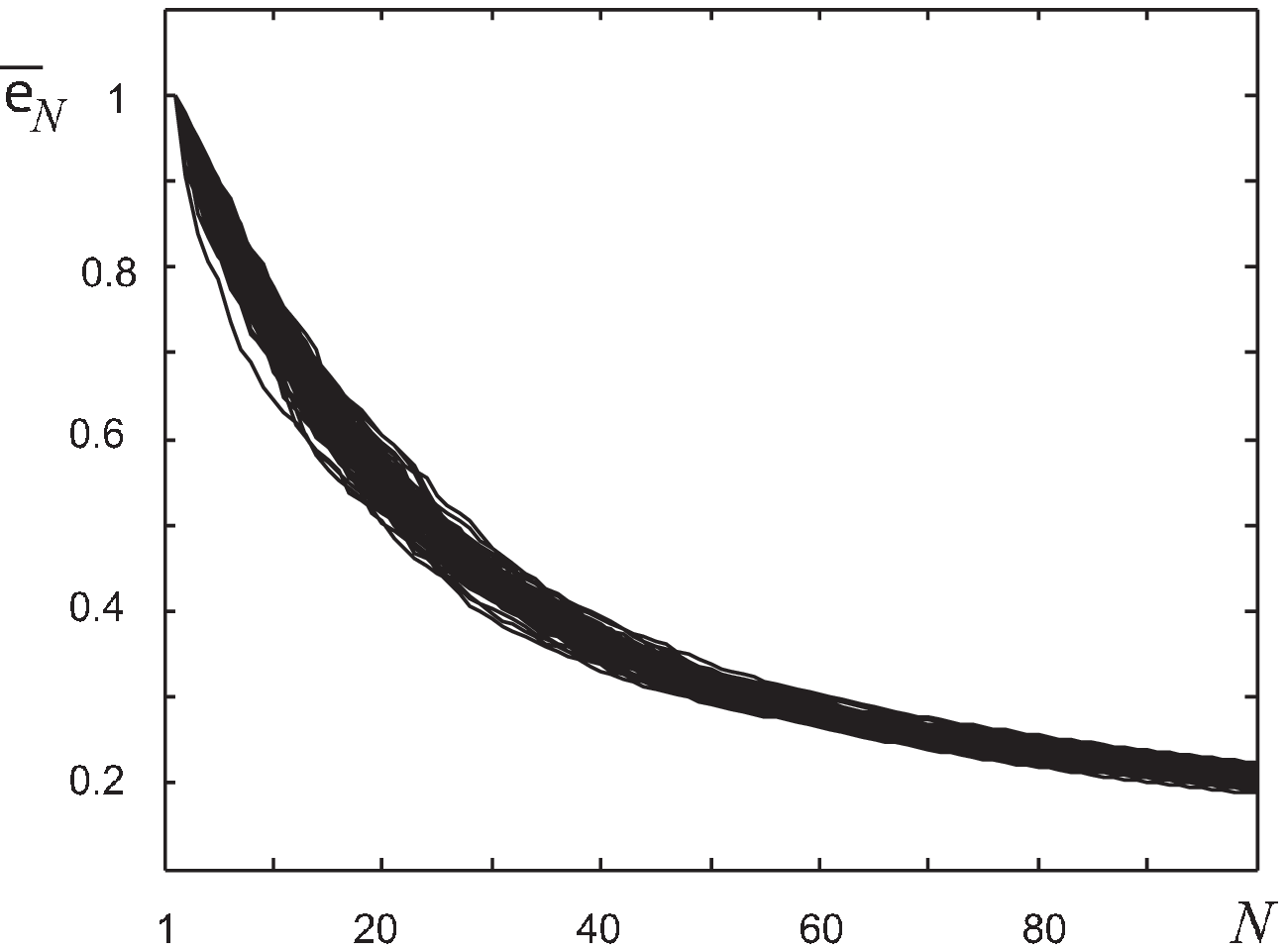}

\small (a) \hspace{6.5cm} (b)
\end{center}

\begin{center}
\includegraphics[width=155pt]{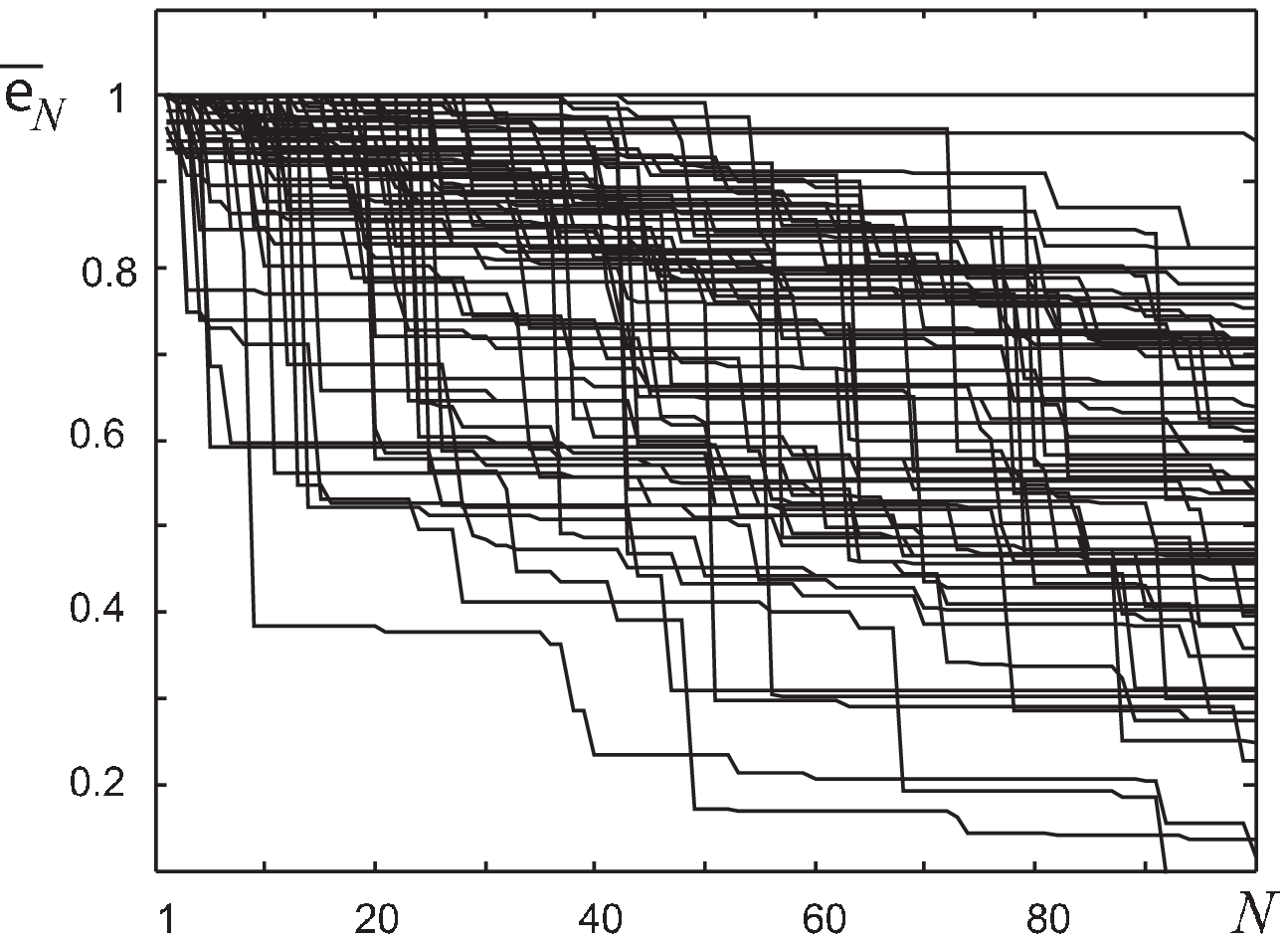}

\small (c)
\end{center}
\caption{Practical speed of convergence of function approximators
that use greedy algorithm (panel (b)) and Monte-Carlo based
random choice of basis functions (panel (c)). The target function
is shown in panel (a). }\label{fig:example}
\end{figure}

Overall comparison of these two methods is provided in Fig.
\ref{fig:example:2}, in which the errors $\bar{e}_{\color{black} N}$ are presented
in the form of a box plot. Black solid curves depict the median of
the error as a function of the number of elements, ${\color{black}N}$, in the
network; blue boxes contain $50\%$ of the data points in all trials;
``whiskers'' delimit the areas containing $75\%$ of data, and red
crosses show the remaining part of the data. As we can see from
these plots, random basis function approximators, such as the RVFL
networks, mostly do not match performance of greedy approximators
for networks of reasonable size. Perhaps, employing integration
methods with variance minimization could improve the performance.
This, however, would amount to using prior knowledge about the
target function $f$, making it difficult to apply the RVFL networks
to problems in which the function $f$ is uncertain.

\begin{figure}
\begin{center}
\includegraphics[width=0.45\columnwidth]{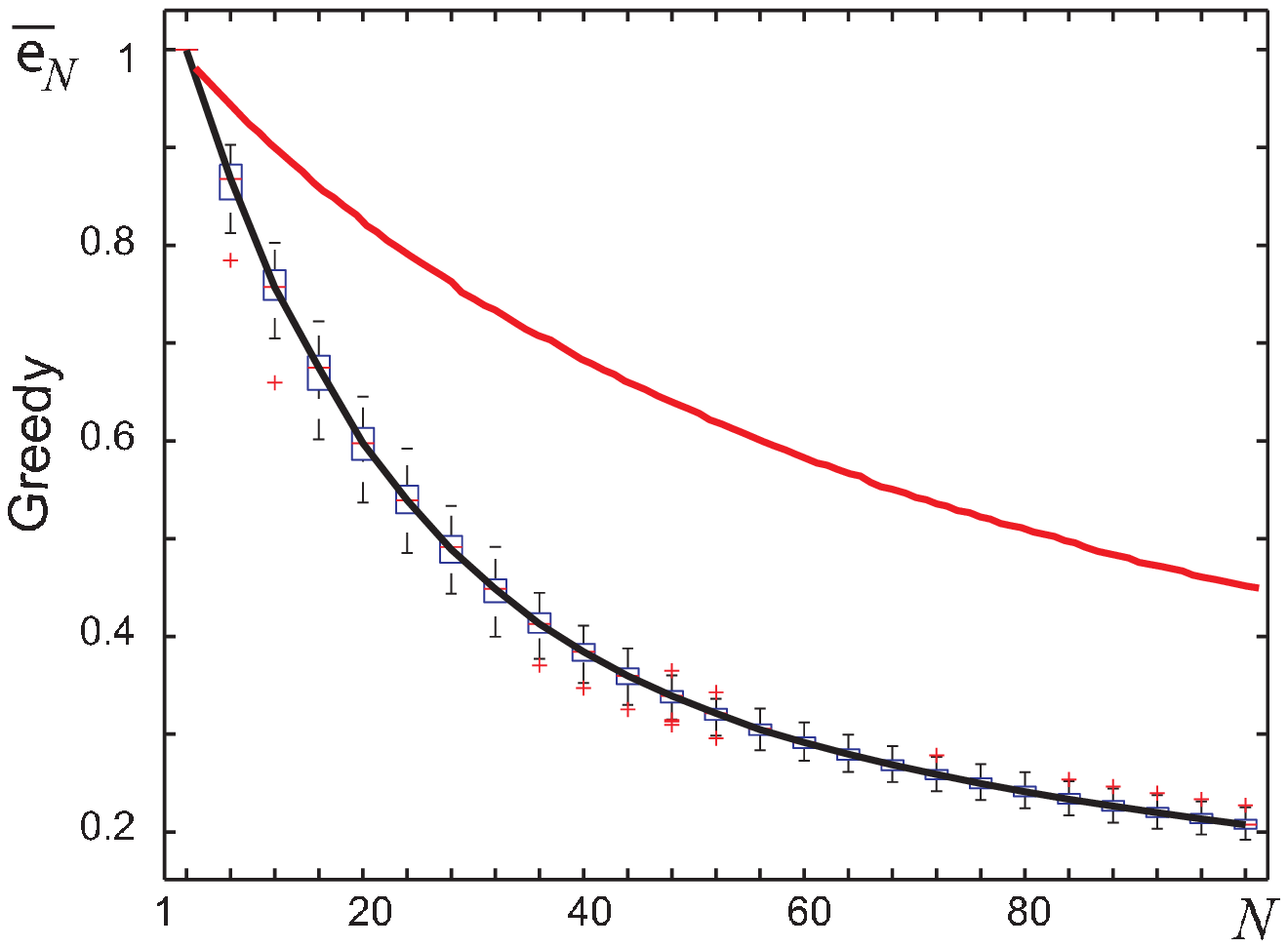} \hspace{3mm}
\includegraphics[width=0.45\columnwidth]{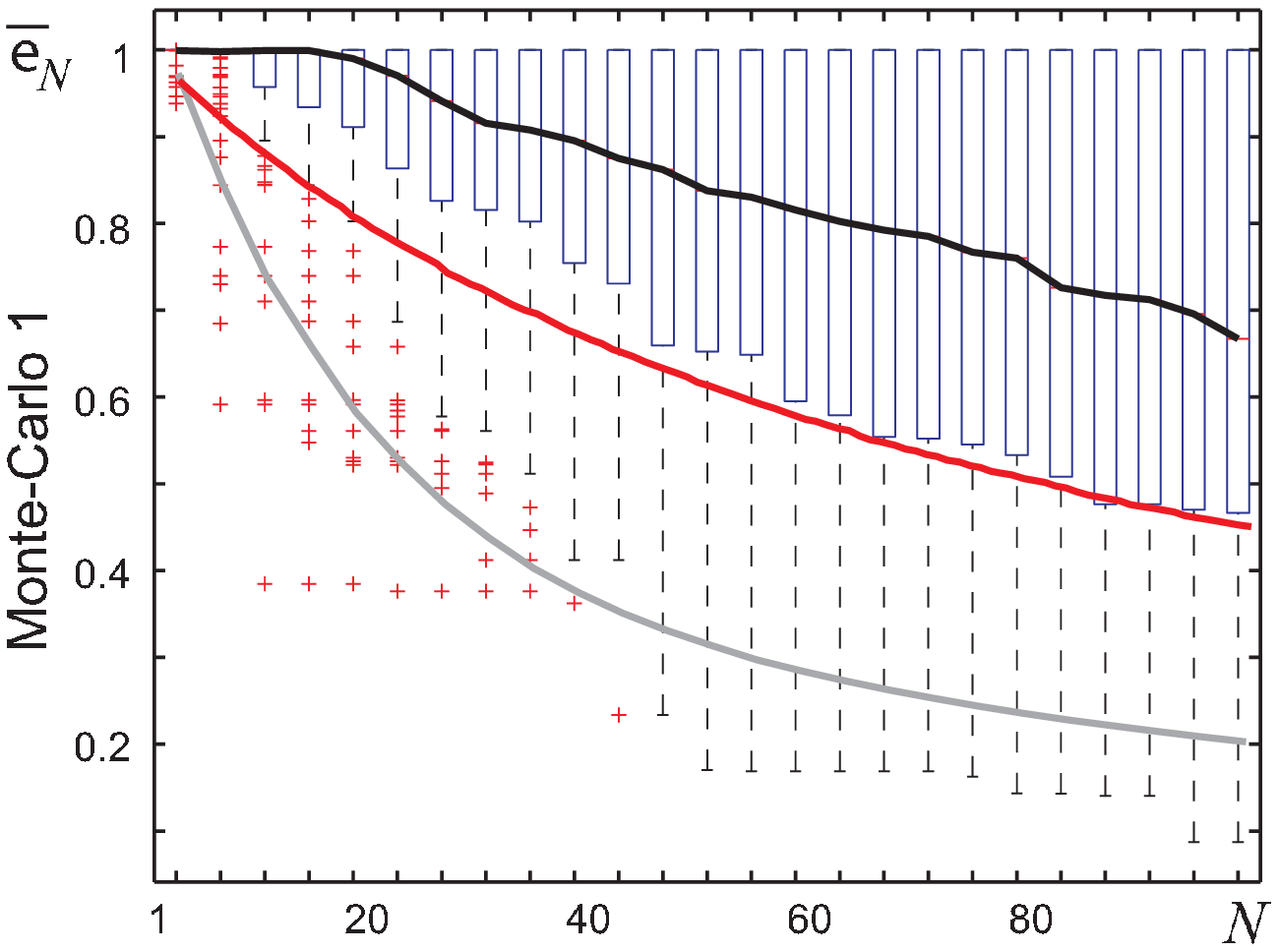}

\small (a) \hspace{6.5cm} (b)
\end{center}
\begin{center}
\includegraphics[width=0.45\columnwidth]{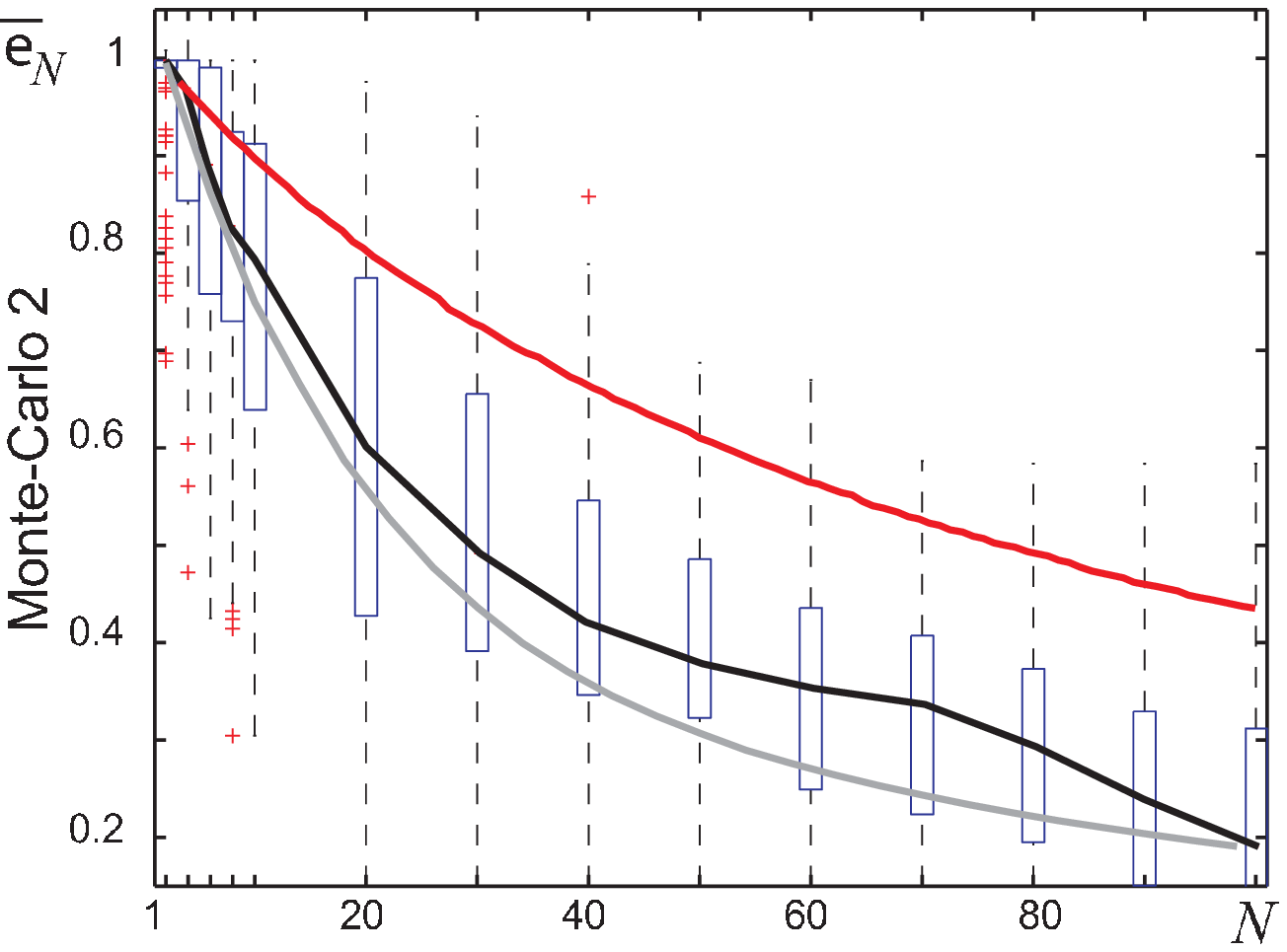}

(c)
\end{center}
\caption{Box plots of convergence rates for function approximators
that use greedy algorithm (panel (a)) and Monte-Carlo random choice
of basis functions (panels (b) and (c)). Panel (b)
corresponds to the case in which the basis functions leading to
ill-conditioning were discarded. Panel (c) shows performance
of the MLP trained by the method in \cite{Feldkamp98b} which is
effective at counteracting ill-conditioning while adjusting the
linear weights only. The red curve shows the upper bound for
$\bar{e}_{\color{black} N}$ calculated in accordance with (\ref{eq:rate:greedy}). We
duplicated the average performance of the greedy algorithm (grey
solid curve) in the middle and bottom panels for convenience of
comparison.}\label{fig:example:2}
\end{figure}

Now we demonstrate performance of an MLP trained to approximate this
target function. The NN is trained by a gradient based method
described in \cite{Feldkamp98b}. At first, the full network training
is carried out for several network sizes ${\color{black} N}=20$, $40$, $60$, $80$
and $100$ and input samples randomly drawn from $x\in [0,1]$. The
values of $\bar{e}_{\color{black}N}$ are $1.5\cdot10^{-4}$ for all the network
sizes (as confirmed in many training trials repeated to assess
sensitivity to weight initialization). This suggests that training
and performance of much smaller networks should be examined. The
networks with ${\color{black} N}=2,4,6,8,10$ are trained, resulting in
$\bar{e}_{\color{black} N}=0.5749,0.1416,0.0193,0.0011,0.0004$, respectively,
averaged over $100$ trials per the network size.  Next, we train
only the linear weights ($c_i$ in (\ref{eq:mlp})) of the MLP, fixing
the nonlinear weights $w_i$ and $b_i$ to random values. The results
for $\bar{e}_{\color{black} N}$ averaged over $100$ trials are shown in Fig.
\ref{fig:example:2}, bottom panel (black curve). Remarkably, the
results of random basis network with ${\color{black} N}=100$ are worse than those of
the MLP with ${\color{black}N}\ge 4$ and full network training.  These results
indicate that both the greedy and the Monte-Carlo approximation
results shown in Fig. \ref{fig:example:2} are quite conservative.
Furthermore, the best of those two, i.e., the greedy
approximation's, can be dramatically improved by a practical
gradient based training.

\addtolength{\textheight}{-1cm}   

\subsection{Measure concentration effects}

Measure concentration effects, as presented in Section \ref{sec:main}, have been discussed for idealized objects such as $\Sphere^{n-1}(R)$ and $\Ball^n(R)$. The phenomenon, however, broadly applies to other objects whose geometric and formal description is not limited to the former.

In order to illustrate this point we analysed a database of HOG feature vectors \cite{HOG} containing representations  of images of faces\footnote{The database has been developed by Apical LTD.} as well as the negatives (non-faces). Each feature vector has $1920$ components, and hence belongs to $\Real^n$ with $n=1920$. Vectors of each classes have been centered and normalized so that they belong to the hypercube $[-1,1]^n$. Fig. \ref{fig:HoG_angles} shows distributions of angles between a randomly chosen vector ($1$-st) and that of the rest in their respective classes.
\begin{figure}
\begin{center}
\includegraphics[width=300pt]{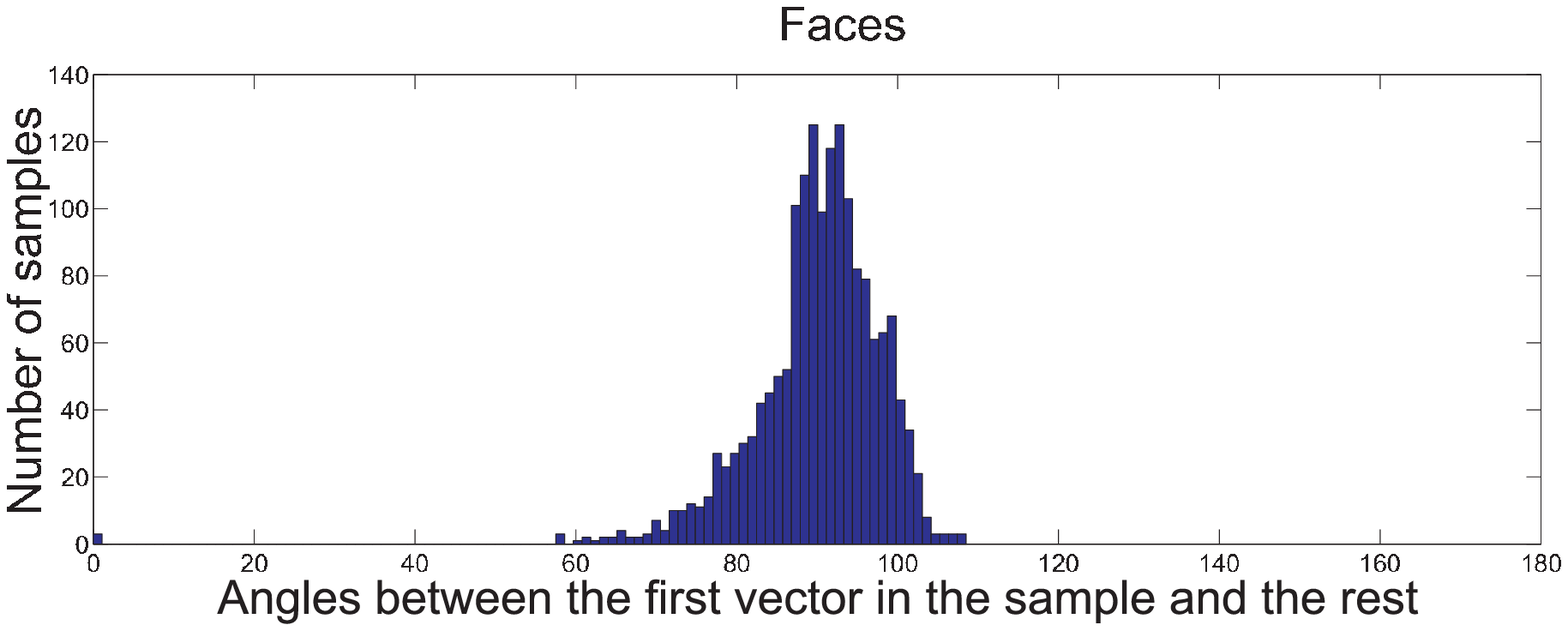}
\small (a)
\end{center}

\begin{center}
\includegraphics[width=300pt]{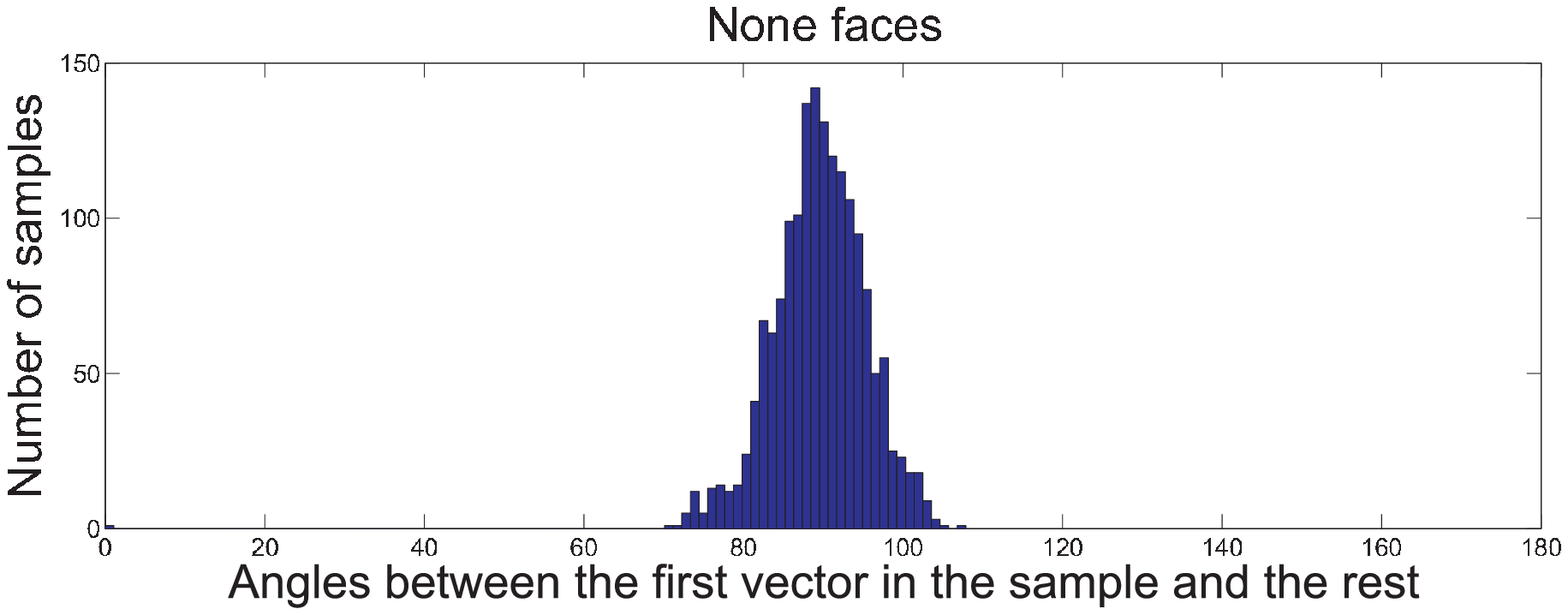}

\small (b)
\end{center}
\caption{Measure concentration in high dimensions. Panel (a) shows histogram of angles between a randomly chosen feature vector in the set of "faces" and the rest of the vectors in the class. Panel (b) shows histogram of angels between a randomly chosen feature vector in the set of "non-faces" (negatives) and the rest of vectors in this class.}\label{fig:HoG_angles}
\end{figure}
As one can see from this figure, the angles concentrate in a vicinity of $\pi/2$ which is consistent with our derivations for points in $\Ball^{n}(1)$.

Another interesting effect of measure concentration is exponential growth of the lengths of chains of randomly chosen vectors which are pairwise almost orthogonal. In order to illustrate and assess validity of our estimates (\ref{eq:exponential_dimension}), (\ref{eq:improved_estimate}) the following numerical experiments have been performed. A point is first randomly selected in a hypercube $[-1,1]^n$ of some given dimension. The second point is randomly chosen in the same hypercube. These two points correspond to two vectors randomly drawn in $[-1,1]^n$. If the angle between the vectors was within $\pi/2\pm 0.037\pi/2$ then the vector was retained. At the next step a new vector is generated in the same hypercube, and its angles with the previously generated vectors are evaluated. If these angles are within $0.037\pi/2$ of $\pi/2$ then the vector is retained. The process is repeated until the chain of almost orthogonality breaks, and the number of such pairwise almost orthogonal vectors (length of the chain) is recorded. Results are shown in Figure \ref{fig:dimensionality_bounds}. Red line corresponds to the conservative theoretical estimate (\ref{eq:exponential_dimension}), green curve shows refined estimate, (\ref{eq:improved_estimate}), and box plot shows lengths of pairwise almost orthogonal chains as a function of dimension. The value of $\vartheta$ was set to $0.1$ for both theoretical estimates, and our choice of precision margins $\pi/2\pm 0.037\pi/2$ {\color{black} corresponds} to $\varepsilon=\cos(0.963\pi/2)=0.0581$. As we can see from this figure our empirical observations are well aligned with theoretical predictions.

\begin{figure}
\begin{center}
\includegraphics[width=300pt]{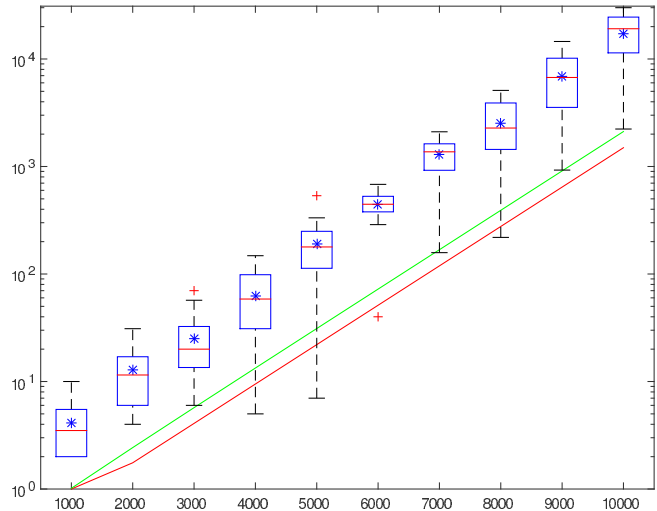}
\end{center}
\caption{Lengths $N$ of pairwise almost orthogonal chains of vectors that are independently randomly sampled from $[-1,1]^n$ as a function of dimension, $n$. For each $n$ $20$ pairwise almost orthogonal chains where constructed numerically. Boxplots show the second and third quartiles of this data for each $n$, red bars correspond to the medians, and blue stars indicate means. Red curve shows theoretical bound (\ref{eq:exponential_dimension}), and green curve shows refined estimate (\ref{eq:improved_estimate}).}\label{fig:dimensionality_bounds}
\end{figure}

\subsection{Approximation of a constant: dimensionality blowup}

Another, and perhaps, more interesting illustration of measure concentration and orthogonality effects belongs to the field of function approximation. Let us suppose that we are to approximate a given continuous function defined on an interval $[0,1]$ by linear combinations of the following type
\begin{equation}\label{eq:approx_lin}
f_N(x)=\sum_{i=1}^N c_i \varphi(a_i,\sigma_i,x),
\end{equation}
where the function $\varphi$ is defined as follows
\[
\varphi(a,\sigma,x)=\left\{\begin{array}{ll} 0, & x>a+\sigma/2\\
                                            1, & a-\sigma/2\leq x\leq a+\sigma/2\\
                                            0, & x < a-\sigma/2 \end{array}\right.
\]
For simplicity we suppose that the function to be approximated, $f^\ast$ is a constant:
\[
f^\ast(x)=1 \ \forall \ x\in[0,1].
\]

Linear combinations of $\varphi(a_i,\sigma_i,\cdot)$  can uniformly approximate every continuous function on $[0,1]$. Furthermore the chosen function $f^\ast$ can be represented by just a single element with $a=0.5,\sigma=0.5$: $f^\ast(x)=\varphi(0.5,0.5,x)$. Since we assumed no prior knowledge of these parameters, we approximated the function $f^\ast$ with linear combinations (\ref{eq:approx_lin}) in which the values of $a_i,\sigma_i$ were chosen randomly in the interval $[0,1]$, and the values of $c_i$ were chosen as follows
\begin{equation}\label{eq:example_approx_opt}
c_1,\dots,c_N=\arg \min_{c_1,\dots,c_N} \int_0^1 (f^\ast(x)-f_N(x))^2 dx.
\end{equation}

In order to evaluate performance of approximation as a function of $N$ the following iterative procedure has been used. On the first step the values of $a_1$ and $\sigma_1$ are randomly drawn from the interval $[0,1]$. This is followed by finding the  value of $c_1$ in accordance with (\ref{eq:example_approx_opt}) (it is clear though that $c_1=1$). Next the values of $a_2,\sigma_2$ are drawn from $[0,1]$ followed by determination of optimal weights $c_1$, $c_2$. The $L_2$ error of approximation is recorder for each step. The process repeats until $N=500$.

Fig. \ref{fig:ensemble_errors} presents $20$ different error curves corresponding to different growing systems of functions $\{\varphi(a_i,\sigma_i,\cdot{\color{black})}\}_{i=1}^{N}$.
\begin{figure}
\includegraphics[width=300pt]{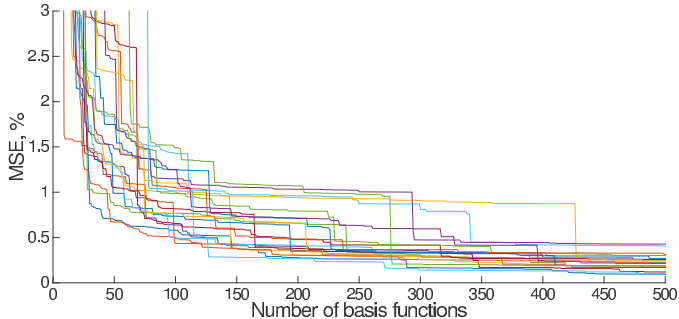}
\caption{Errors of approximation of $f^\ast(x)=1$ by linear combinations $f_N(x)$, (\ref{eq:approx_lin}), as functions of $N$.} \label{fig:ensemble_errors}
\end{figure}
Despite that {\color{black} the} problem is both a) simple and b) admits explicit solutions with respect to the values of $c_1,\dots,c_N$, performance of such approximations in terms of convergence rates is far from ideal. One can observe that initially there is a significant drop of the approximation error for $N\leq 100$. After this value, however, the error decays very slowly and its rate of decay nearly stalls for $N$ large.

One of potential explanation of this effect is as follows. Fig. \ref{fig:errors_approx_individual} shows functions $f^\ast(x)-f_N(x)$ for $N=5$, $50$, and $500$ along a single typical curve from Fig. \ref{fig:ensemble_errors}.
\begin{figure}
\begin{center}
\includegraphics[width=300pt]{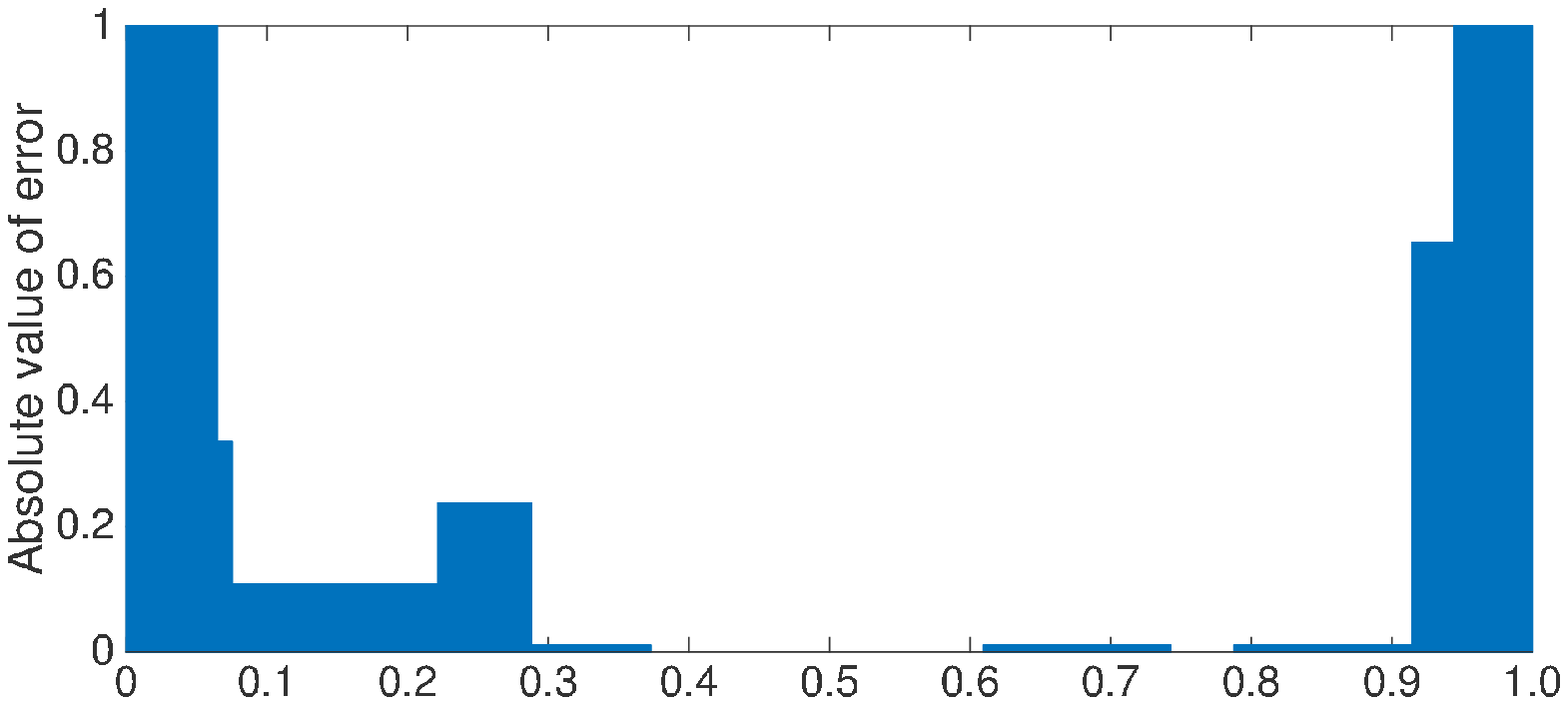}

\small (a)

\includegraphics[width=300pt]{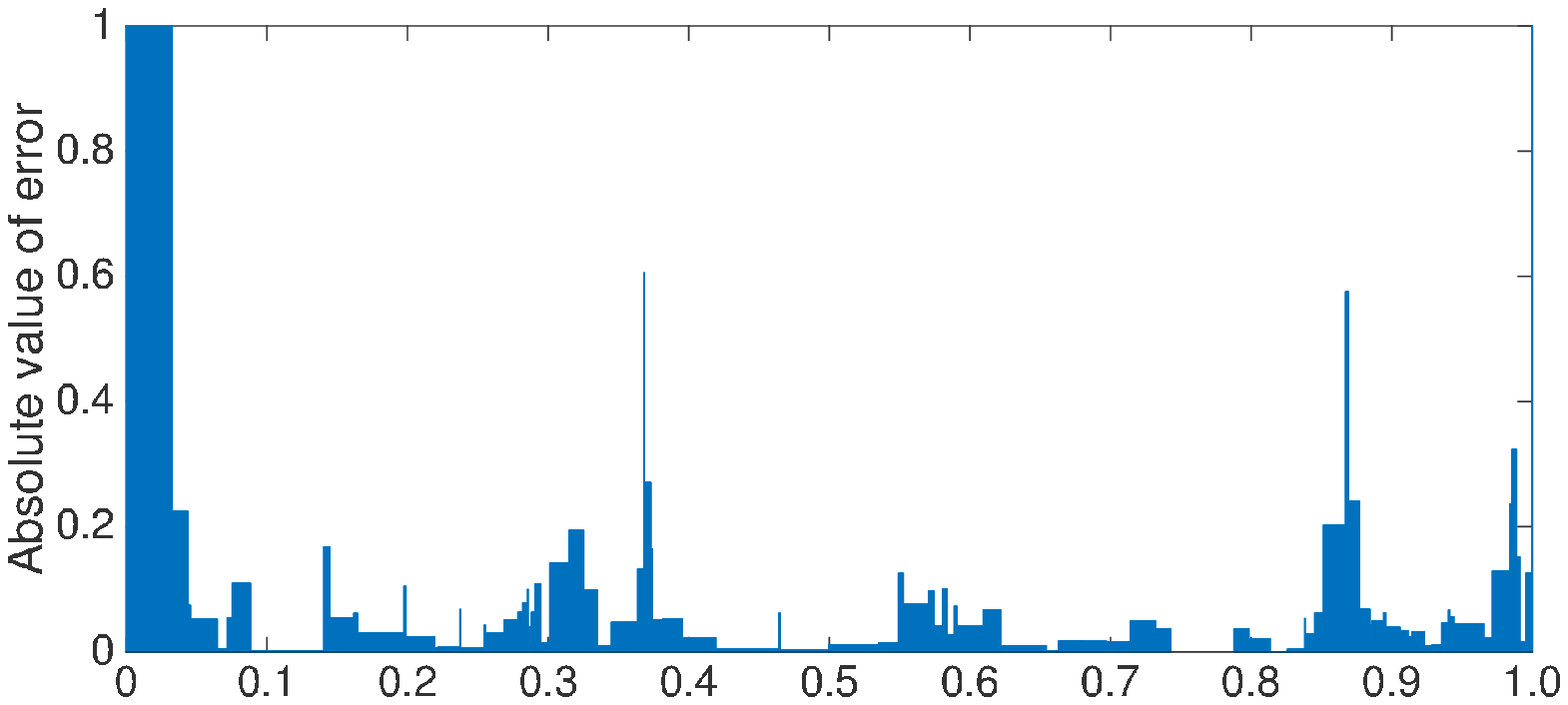}

\small (b)

\includegraphics[width=300pt]{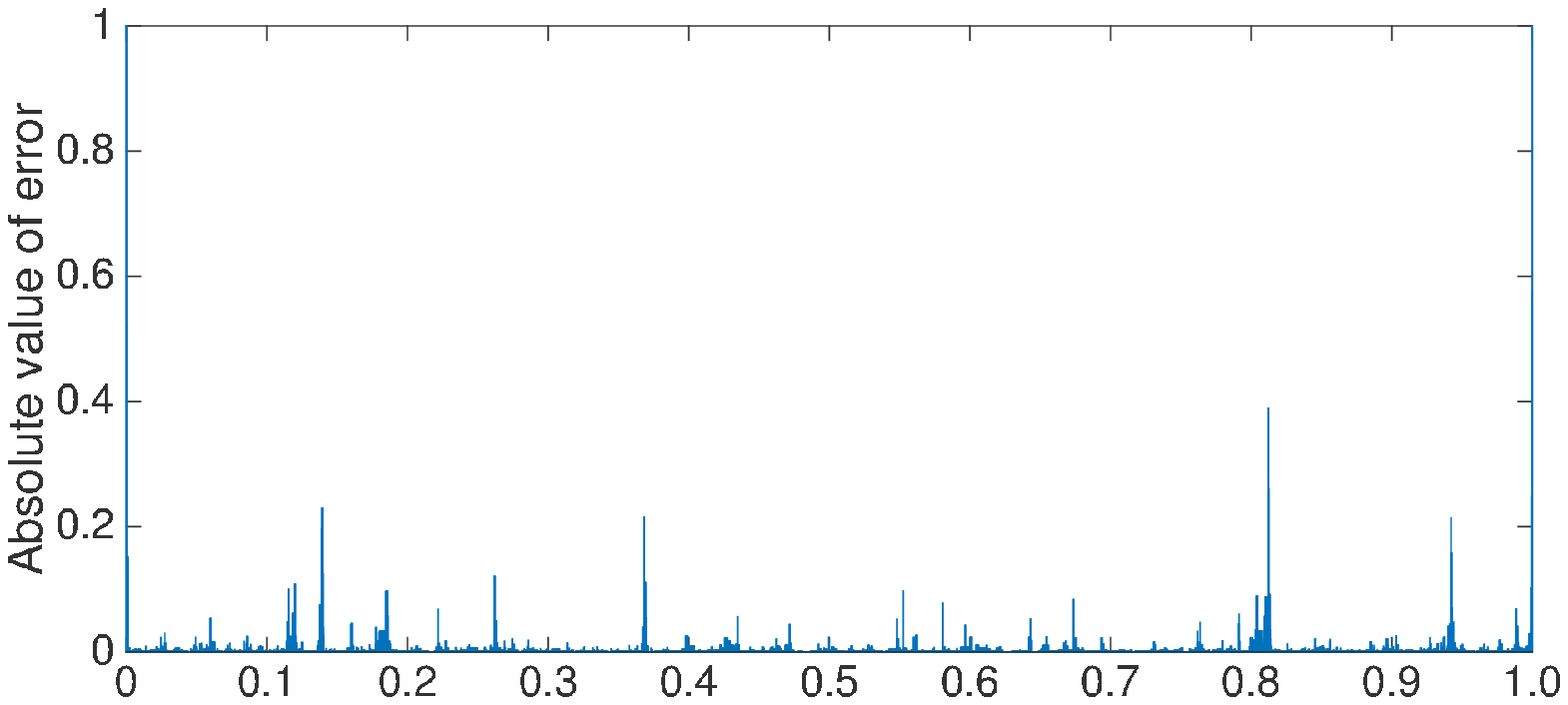}

\small (c)
\end{center}
\caption{Error function $f^\ast(x)-f_N(x)$ for $N=5$ (panel (a)), $N=50$ (panel (b)), and $N=500$ (panel (c)) for a typical curve from Fig. \ref{fig:ensemble_errors}.}\label{fig:errors_approx_individual}
\end{figure}
As we can see from the these figures the error functions become more and more pitchy or spiky with $N$. The individual spikes are randomly distributed on $[0,1]$, and their thickness is converging to zero.  To compensate for such errors one needs to be able to generate a very narrow $\varphi(a_i,\sigma_i,\cdot)$ which, in addition, is to be placed in the right location. Effectively, if error reduction at each step is sought for, this is equivalent to dimensionality growth of the problem at each step. However, in accordance with (\ref{eq:exponential_dimension}) to overcome negative effect of measure concentration, one needs to accumulate exponentially large number of samples. This is reflected in very slow convergence at the end of the process.


\section{Conclusion}\label{sec:Conclusion}

In this work we demonstrate that, despite increasing popularity of
random basis function networks in control literature, especially in
the domain of intelligent/adaptive control, one needs to pay special
attention to practical aspects that may affect performance of these
systems in applications.

First, as we analyzed in Section \ref{sec:Analysis} and showed in our examples,
although the rate of convergence of the random basis function
approximator is qualitatively similar to that of the greedy
approximator, the rate of the random basis function approximator is
{\it statistical}. In other words, small approximation errors are guaranteed here in
{\it probability}. This means that, in some applications such as e.g. practical adaptive control
in which the RVFL networks are to model or compensate system
uncertainties, employment of a re-initialization with a supervisory
mechanism monitoring quality of the RVFL network is necessary.
Unlike network training methods that adjust both linear and
nonlinear weights of the network, such mechanism may have to be made
robust against numerical problems (ill-conditioning).

Our conclusion about the random basis function approximators is also
consistent with the following intuition. If the approximating
elements (network nodes) are chosen at random and not subsequently
trained, they are usually not placed in accordance with the density
of the input data. Though computationally easier than for nonlinear
parameters, training of linear parameters becomes ineffective at
reducing errors ``inherited" from the nonlinear part of the
approximator. Thus, in order to improve effectiveness of the random
basis function approximators one could combine unsupervised
placement of network nodes according to the input data density with
subsequent supervised or reinforcement learning values of the linear
parameters of the approximator.  However, such a combination of
methods is not-trivial because in adaptive control and modeling one
often has to be able to allocate approximation resources adaptively
-- and the full network training seems to be the natural way to
handle such adaptation.

Second, we showed that in high dimensions exponentially large number of randomly and independently chosen vectors are almost orthogonal with probability close to one. This implies that in order to represent an element of such high-dimensional space by linear combinations of randomly and independently chosen vectors, it may often be necessary to generate samples of exponentially large length if we use bounded coefficients in linear combinations.  On the other hand, if coefficients with arbitrarily large values are allowed, the  number of randomly generated elements that are sufficient for approximation is even less than dimension. In the latter case, however, we have to pay for such a significant reduction of the number of elements by ill-conditioning of the approximation problem.

A simple numerical example that illustrates such behavior has been provided. Not only this example demonstrates the effect of measure concentration in a simple approximation problem, it also highlights practical implications for other approximation schemes that are based on randomization.

%

\bibliographystyle{plain}
\bibliography{randomized_literature}

\end{document}